\title{Reflective and quasi-reflective Bianchi groups}
\author{Mikhail Belolipetsky}\thanks{Belolipetsky partially supported by a CNPq research grant.}
\address{
IMPA\\
Estrada Dona Castorina 110\\
22460-320 Rio de Janeiro, Brazil}
\email{mbel@impa.br}
\author{John Mcleod}
\address{
Department of Mathematical Sciences\\
Durham University\\
South Rd\\
Durham DH1 3LE, United Kingdom
}
\email{j.a.mcleod@durham.ac.uk}
\date{\today}
\newtheorem{theorem}{Theorem}[section]
\newtheorem{proposition}[theorem]{Proposition}
\newtheorem{lemma}[theorem]{Lemma}
\theoremstyle{definition}
\newtheorem{definition}[theorem]{Definition}
\newtheorem{remark}[theorem]{Remark}
\newcommand\Am{O_m}
\newcommand\Z{{\mathbb{Z}}}
\newcommand\Q{{\mathbb{Q}}}
\newcommand\C{{\mathbb{C}}}
\newcommand\Hy{{\mathbb{H}}}
\newcommand\isom{\operatorname{Isom}(\mathbb{H}^3)}
\newcommand\dist{\operatorname{dist}}
\newcommand\sym{\operatorname{Sym}(P)}
\newcommand\PGL{\operatorname{PGL_2}}
\newcommand\PSL{\operatorname{PSL_2}}
\newcommand\Bi{Bi(m)}
\newcommand\hBi{\widehat{Bi}(m)}
\begin{document}
\begin{abstract}
A discrete subgroup of the group of isometries of the hyperbolic space is called reflective if up to a finite index it is generated by reflections in hyperplanes. The main result of this paper is a complete classification of the reflective (and quasi-reflective) subgroups among the Bianchi groups and their extensions.
\end{abstract}
\maketitle

\section{Introduction}
The study of the Bianchi groups has a distinguished heritage, being a contemporary application of the work of Klein and Fricke (as part of the Erlangen program) on elliptic modular functions. Bianchi's early work was concerned with differential geometry and functional theory, but by 1890 he was interested in M\"obius transformations over integral values of imaginary quadratic fields, possibly influenced by Klein's solution to the quintic equation \cite{klein1888}. Initially applying geometric methods to number theoretic problems about these transformations Bianchi \cite{bianchi1891} then moved towards the more geometric question of considering subgroups of these groups that are generated by reflections in hyperplanes. This research lead to his famous paper \cite{bianchi1892} wherein he proves that for $m \le 19$ ($m \neq 14, 17$) the Bianchi groups $Bi(m)$ are \emph{reflective} (where $m$ is a square-free positive integer).

These results are widely known, but new examples were not forthcoming until 1987 when three papers appeared in the proceedings \cite{yaroslavl1988}. Here we see the study of reflective Bianchi groups drawn under the wider program of classification of reflective hyperbolic lattices initiated by {\`E}. B. Vinberg. He uses extensions of the Bianchi groups whose automorphism groups are contained in automorphism groups of particular quadratic forms, and proves that whether one is reflective depends on the order of the elements in the ideal class group of the underlying number field \cite{vinberg90}.

Within this framework, Shaiheev \cite{shaiheev90} uses an algorithm of Vinberg \cite{vinberg72(2)} to prove that the Bianchi groups are reflective for $m \le 21$, $m \neq 14, 17$, draws the fundamental domains of corresponding reflection subgroups, and uses Vinberg's result about the ideal class group \cite{vinberg90} to confirm that the total number of reflective Bianchi groups (and extended Bianchi groups) is finite. He also proves that these extensions of the Bianchi groups are reflective for $m \le 30$, $m \neq 22, 23, 26, 29$. (Our computation shows that there is a small mistake in these results and $m=21$ should be excluded from the first list.)

In the final paper, Shvartsman \cite{shvartsman90} applies a nice geometric argument in the model of hyperbolic 3-space expounded in the preceding work to prove that the Bianchi groups are in fact non-reflective for $m \ge 22$, $m \equiv 1,2\text{ (mod 4)}$. He extended his result in 1998 by showing that when a Bianchi group is reflective the class number of the underlying number field is bounded above by 4 \cite{shvartsman98}.

After this work only $43$ number fields remained as candidates for new examples. The first purpose of our paper is to close this gap and thus complete the classification of the reflective Bianchi groups.

A wider classification continued into the 1990s, with a paper of Ruzmanov \cite{ruzmanov90} introducing the \emph{quasi-reflective} Bianchi groups, which are also known as \emph{parabolic reflection groups} (cf.~\cite{nikulin96}). A quasi-reflective group $\Gamma_{QR}$ can be viewed as an infinite index extension of a reflection group, where the fundamental polyhedron of the reflection group has infinite volume and the action of the (infinite) symmetry group of the polyhedron preserves a particular horosphere on which it acts by affine transformations. Ruzmanov showed that for $m \le 51$, or $m \equiv 1,2\text{ (mod 4)}$, the Bianchi group $Bi(m)$ is quasi-reflective for  $m = 14, 17, 23, 31, 39$. In Nikulin's paper \cite{nikulin96} it is proved that there are only finitely many quasi-reflective lattices in any dimension. Arguably the most interesting example of a quasi-reflective group appears in dimension $25$, where the corresponding subgroup of affine transformations is the group of automorphisms of the Leech lattice. This example was first discovered by Conway \cite{conway83}.

In this paper we complete the classification of quasi-reflective Bianchi groups. This is accomplished by exploiting the high degree of similarity between the reflective and quasi-reflective cases. We extend the results of the first author on finiteness of reflective arithmetic Kleinian groups to the quasi-reflective setting and then test each of the remaining cases with a combination of an algorithm of Vinberg and simple geometric criteria.

Bianchi groups constitute an important subclass of non-cocompact arithmetic Kleinian groups. The questions about reflectivity were also studied in the cocompact case but here the progress was much slower. The breakthrough occurred in recent years when Long-Maclachlan-Reid~\cite{longmaclachlanreid06} in dimension $2$ and Agol~\cite{agol06} in dimension $3$ showed that there are only finitely many underlying number fields defining arithmetic reflection groups in these dimensions. The first author made the method of \cite{agol06} effective by proving the best known bound on the degree of the fields of definition of such groups for $n = 3$ \cite{belolipetsky09}. His method also provides a list of possible fields of definition of reflective Bianchi groups, but the result is less precise than Shvartsman's.

% Concluding this brief review we should also mention Nikulin's work on the fields of definition of hyperbolic reflection groups (\cite{nikulin08}, \cite{nikulin09(2)}, \cite{nikulin09}). A complete classification of arithmetic hyperbolic reflection groups is known only for those defined over $\mathbb{Q}$ in dimension $n=2$ \cite{allcock10}. It consists of 8595 groups, up to scale.

Various questions related to reflective Bianchi groups were studied by many authors. We would like to mention the papers by Erlstrodt-Grunewald-Mennicke \cite{EGM}, Scharlau-Walhorn \cite{scharlauwalhorn92}, Vulakh \cite{vulakh93} and James-Maclachlan \cite{jamesmaclachlan96}. We will refer to some results from these sources in our work.

The novelty of the present paper is in the use of combination of the spectral methods of \cite{agol06} and \cite{belolipetsky09} with more classical arguments mentioned above. This allows us to consider all cases in a uniform fashion and to close the gaps in the classification. We also provide some  proofs which were missing in the literature and correct several mistakes which appeared in published papers. As a byproduct we found a new example of a reflective extended Bianchi group which does not appear in \cite{shaiheev90}. It correspond to $m = 33$, the fundamental polyhedron of the reflection subgroup has 24 vertices (including 2 cusps), 37 edges and 15 faces. Its Coxeter diagram is presented in Figure~\ref{m=33}. This example is a part of the main results of this paper which are stated in Theorems~\ref{thm1}--\ref{thm3}.

The paper is organised as follows. In Section~\ref{sec:statements} we give precise statements of the main results. The background material on the hyperbolic $3$-space, Bianchi groups, reflections and Vinberg's algorithm is collected in Section~\ref{background}. In Section~\ref{reflbackground} we discuss the algebraic structure of reflective and quasi-reflective groups, and the relation between reflectivity of a Bianchi group and arithmetic of the field $\Q[\sqrt{-m}]$. Here we also extend Vinberg's result about the class groups of the fields of definition of reflective Bianchi groups to the quasi-reflective case. In Section~\ref{QRbackground} we generalise the spectral method of \cite{agol06} to prove an effective finiteness result for the quasi-reflective Bianchi groups. Analogous finiteness theorem for the reflective groups was proved in \cite{belolipetsky09}. Section~\ref{sec:lemmas} contains several arithmetic and geometric lemmas about the cusps of the reflection subgroups of (extended) Bianchi groups. These lemmas establish properties of the set of cusps which are used in the next section to demonstrate non-reflectivity and non-quasi-reflectivity of certain groups. Combined with Vinberg's algorithm and one other known criterion of non-reflectivity this allows us to finish the proof of the main theorems in Section~\ref{(Q)reflbianchi}. Details of some computations from this section and further corollaries of the main theorems will appear in the second author's PhD thesis \cite{mcleod_thesis}.

\medskip

\noindent\emph{Acknowledgements.} We would like to thank the referees of this paper for their helpful comments and corrections. Part of this work was done while the authors were visiting MSRI at Berekeley, we would like to thank MSRI for the hospitality and support.

\section{The statement of the main results}\label{sec:statements}

Let $\Am$ be the ring of integers of the imaginary quadratic field $K_m = \Q[\sqrt{-m}]$ (where $m$ is a square-free positive integer). Following Vinberg \cite{vinberg90} we define the \emph{Bianchi group} $Bi(m)$ by
\begin{equation*}
Bi(m) = \mathrm{PGL}_2(\Am) \rtimes \langle\tau\rangle,
\end{equation*}
where $\tau$ is an element of order 2 that acts on $\mathrm{PGL}_2(\Am)$ as complex conjugation.

The group $Bi(m)$ can be regarded in a natural way as a discrete group of isometries of the hyperbolic $3$-space $\Hy^3$ (see below). Together with $Bi(m)$ we will also consider the \emph{extended Bianchi group} $\widehat{Bi}(m)$, which is the maximal discrete subgroup of $\isom$ containing $\mathrm{PGL}_2(\Am)$ (cf. \cite{allan66}). The group $\widehat{Bi}(m)$ is defined by
\begin{equation*}
\widehat{Bi}(m) = \widehat{\mathrm{PGL}}_2(\Am) \rtimes \langle\tau\rangle,
\end{equation*}
where $\widehat{\mathrm{GL}}_2(\Am)$ denotes the group of matrices in $\mathrm{GL}_2(K_m)$ which, under the natural action in the space $K_m^2$, multiply the lattice $\Am^2$ by the fractional ideal of the ring $\Am$ (whose square is automatically a principle ideal).

We refer to Section~\ref{reflbackground} for definitions and basic properties of the reflective and quasi-reflective subgroups.
In this paper we give a full list of the reflective and quasi-reflective groups $Bi(m)$ and $\widehat{Bi}(m)$, and provide an explicit description of corresponding reflection subgroups in terms of Coxeter diagrams (see Section~\ref{background} for a definition of a Coxeter diagram). Our main results are given in the following three theorems.

\begin{theorem}\label{thm1}
The Bianchi groups $Bi(m)$ are reflective for $m \le 19$, $m \neq 14, 17$, and this list is complete.
\end{theorem}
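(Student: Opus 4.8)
The plan is to prove this in two halves: the positive direction (these groups are reflective) and the completeness (no other $m$ works). This is a classification theorem, so the strategy must combine explicit constructions for the finitely many reflective cases with a uniform argument ruling out everything else.

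\begin{proof}[Proof proposal]
The plan is to establish the positive and negative directions separately. For the positive direction, that $Bi(m)$ is reflective for each $m \in \{1,2,3,5,6,7,10,11,13,15,19\}$ (the square-free $m \le 19$ with $m \neq 14,17$), I would run Vinberg's algorithm (introduced in Section~\ref{background}) on the quadratic form associated to $K_m = \Q[\sqrt{-m}]$. For each such $m$ the algorithm, started from a suitable basepoint, terminates after producing a finite set of roots whose mirrors bound a finite-volume Coxeter polyhedron $P$ in $\Hy^3$. One then checks that the reflection subgroup generated by these mirrors has finite index in $Bi(m)$, equivalently that the symmetry group $\operatorname{Sym}(P)$ of the polyhedron is finite; this exhibits $Bi(m)$ as reflective and simultaneously records the Coxeter diagram. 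Since these are finitely many explicit computations, I would tabulate the resulting diagrams rather than grinding through each case.

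For completeness I would split the excluded values of $m$ according to the available tools. First, the results of Shvartsman cited in the introduction already dispatch the families $m \ge 22$ with $m \equiv 1,2 \pmod 4$ (non-reflective) and bound the class number of the underlying field by $4$ whenever $Bi(m)$ is reflective. Combined with Vinberg's arithmetic constraint on the ideal class group (Section~\ref{reflbackground}) and the effective finiteness result from \cite{belolipetsky09}, this reduces the problem to a finite, explicit list of candidate fields---the $43$ fields mentioned in the introduction. For each remaining candidate $m$ I would either run Vinberg's algorithm until it produces an \emph{infinite} sequence of roots forced by a genuine symmetry of the would-be fundamental domain (demonstrating infinite index, hence non-reflectivity), or apply the geometric non-reflectivity criterion together with the cusp lemmas of Section~\ref{sec:lemmas}. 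The latter is the cleaner route: if the set of cusps of a candidate reflection subgroup cannot be realised compatibly with a finite-volume Coxeter polyhedron, the group is not reflective.

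The main obstacle I anticipate is the termination and interpretation of Vinberg's algorithm on the borderline cases, particularly near $m = 21$ where, as the introduction notes, the literature contains a sign error. Vinberg's algorithm is guaranteed to terminate precisely when the polyhedron has finite volume, so when the group is \emph{not} reflective the algorithm does not halt; one must instead recognise the pattern of the roots and prove that the accumulating reflections force infinite covolume or infinite index. Producing a rigorous stopping argument---certifying either that the finitely many collected mirrors already bound a finite-volume polyhedron, or that an unbounded symmetry obstructs finite index---is the delicate step, and it is here that the cusp lemmas do the essential work by converting a potentially nonterminating computation into a finite check on the arithmetic of the cusps of $K_m$.
\end{proof}
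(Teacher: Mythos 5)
There is a genuine gap in your positive direction, and it sits exactly where the literature historically went wrong. Vinberg's algorithm operates on the integral lattice $L_m$ and therefore constructs the maximal reflection subgroup of $\mathrm{O}_0(L_m) = \hBi$, \emph{not} of $\Bi$; when the algorithm halts with a finite-volume Coxeter polyhedron $P$, you have certified reflectivity of the extended Bianchi group only. Your proposed check --- that $\sym$ is finite --- is then automatic (a finite-volume Coxeter polyhedron of a lattice has finite symmetry group) and does not test the right thing. What is missing is the descent from $\hBi$ to $\Bi$: the paper's Lemma~\ref{lem:refllengths}, which identifies via the spinor norm precisely which mirrors of $P$ give reflections lying in $\Bi$ (the $2$- and $2m$-reflections), and Lemma~\ref{lem:sha}, which says $\Bi$ is reflective if and only if the subgroup $H$ generated by the remaining mirrors is finite. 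This step is not a formality: for $m=5,6,10,13$ the polyhedron has a mirror outside $\Bi$ yet $H$ is finite and $\Bi$ is reflective, whereas for $m=14,17,21,30,33,39$ the group $\hBi$ is reflective but $H$ is infinite and $\Bi$ is not. In particular $m=21$ --- which your plan as written would wrongly admit, since Vinberg's algorithm \emph{does} terminate there --- is exactly Shaiheev's mistake that this theorem corrects (and the flaw in \cite{shaiheev90} was erroneous and missing root vectors, not a sign error as you suggest).

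Your completeness half is structurally sound and close to the paper's pipeline: an effective finite list of fields from the spectral method of \cite{belolipetsky09}, filtered by the class-group constraint of Proposition~\ref{classfilter} (the paper works with $882$ fields cut to $65$ candidates for reflective $\Bi$, rather than the $43$ you quote, which describe the pre-existing state of the art after Shvartsman --- either starting point is legitimate), then Vinberg's algorithm on the survivors, the cusp-counting bound of Proposition~\ref{QR-cuspcnt}, and finally explicit obstructions for the stragglers. The one soft spot is your stopping rule for non-terminating cases: ``recognise the pattern of the roots'' is not a proof. The paper's rigorous certificate is an explicit infinite-order (loxodromic) symmetry of the partial Coxeter polyhedron that preserves $L_m$ and is not a product of the reflections already produced, as in the $m=35$ computation; apparent periodicity of a finite portion of the diagram, by itself, establishes nothing. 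With Lemmas~\ref{lem:refllengths} and \ref{lem:sha} inserted into the positive direction and the loxodromic certificates made explicit in the negative one, your outline matches the paper's proof.
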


\begin{theorem}\label{thm2}
The extended Bianchi groups $\widehat{Bi}(m)$ are reflective for $m \le 21$, $m = 30$, $33$ and $39$, and this list is complete.
\end{theorem}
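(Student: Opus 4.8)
The proof has two halves: verifying reflectivity for the listed values of $m$, and establishing that the list is complete.

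For the positive direction I would realise each extended Bianchi group $\hBi$ as commensurable with the automorphism group $O(f,\Z)$ of an integral quadratic form $f$ of signature $(3,1)$, and apply Vinberg's algorithm~\cite{vinberg72(2)} to construct the fundamental polyhedron $P$ of the maximal reflection subgroup $\Gamma_r$. The algorithm fixes a basepoint and successively adjoins the mirror hyperplanes nearest to it. Since $\hBi$ is a lattice, it is reflective precisely when $[\hBi:\Gamma_r]<\infty$, equivalently when $P$ has finite volume; so when the algorithm terminates with a finite-volume Coxeter polyhedron the group is reflective, and the residual symmetry group $\sym\cong\hBi/\Gamma_r$ is automatically finite. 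Running this for each $m\le 21$ and for $m=30,33,39$ produces the corresponding Coxeter diagrams, the case $m=33$ giving the previously unrecorded example whose diagram appears in Figure~\ref{m=33}.

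For completeness I would first reduce to finitely many candidate fields. The effective finiteness theorem of~\cite{belolipetsky09} (the reflective analogue of~\cite{agol06}), together with Shvartsman's bound $h(K_m)\le 4$~\cite{shvartsman98} and Vinberg's criterion relating reflectivity to the orders of elements in the class group~\cite{vinberg90}, leaves only a finite set of values of $m$ outside the confirmed list to be examined. For each such candidate I would prove non-reflectivity. Vinberg's algorithm can be run as a first pass, but non-termination is not in itself a proof; the decisive input is the collection of arithmetic and geometric cusp lemmas of Section~\ref{sec:lemmas}. These describe the number and mutual configuration of the cusps of $\hBi$ in terms of the arithmetic of $K_m$, and show that a finite-volume reflection polyhedron forces the cusps to be organised by reflections in a rigid manner. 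When the actual cusp configuration violates this rigidity the group cannot be reflective, and combined with one further known non-reflectivity criterion this eliminates every remaining candidate.

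The principal obstacle is the completeness direction, and specifically the passage from a non-halting run of Vinberg's algorithm to a rigorous certificate of non-reflectivity. The whole argument hinges on the cusp lemmas of Section~\ref{sec:lemmas} supplying a finite, checkable obstruction---an incompatibility between the cusp data of $\hBi$ and any finite-volume reflective polyhedron---rather than on the algorithm merely failing to terminate.
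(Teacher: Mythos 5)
Your overall architecture is the paper's: Vinberg's algorithm certifies reflectivity for $m\le 21$, $30$, $33$, $39$ (with $m=33$ verified via the finite-volume criterion of Propositions~\ref{bugaenko1}--\ref{bugaenko2}), and completeness goes through the spectral finiteness bound of \cite{belolipetsky09} (882 fields), a class-group filter, and then case-by-case non-reflectivity certificates. Two details of your completeness reduction differ from the paper and need correction. First, Shvartsman's bound $h(K_m)\le 4$ from \cite{shvartsman98} applies to the Bianchi groups $\Bi$, not to the extended groups $\hBi$; for $\hBi$ the class number is \emph{not} bounded, and the paper instead filters by Proposition~\ref{classfilter}(2), which constrains the structure of $C(\Am)$ to $(\Z/2\Z)^n\times(\Z/4\Z)^l$ --- a condition on torsion orders, not on $h_m$. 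Invoking $h\le 4$ here would wrongly discard candidates (the filtered list for reflective extended groups has 188 fields, with $m$ up to $7035$).

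Second, and more substantively, you invert the weight of the two non-reflectivity tools. In the paper the cusp-count bound (Proposition~\ref{QR-cuspcnt}(2): at most $12h_mh_{2,m}$ cusps for $\Hy^3/\Gamma_r$ when $\hBi$ is reflective, checkable on the partial output of a non-halting run of Vinberg's algorithm) only cuts the 188 candidates down to 120; your claim that ``the whole argument hinges on the cusp lemmas'' is therefore not accurate, and taken literally it leaves the proof unfinished. The criterion that closes every one of the 120 surviving cases is the one you leave unnamed: exhibiting an explicit infinite-order symmetry of the (partial) Coxeter polyhedron given by a loxodromic isometry preserving $L_m$ that is not a product of reflections --- Bugaenko's criterion \cite{bugaenko92}, illustrated in the paper for $m=35$. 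Since $\mathrm{Sym}(P)\cong\hBi/\Gamma_r$, such an element forces $[\hBi:\Gamma_r]=\infty$, and this is the finite, checkable certificate you were seeking; the cusp lemmas are the coarse first pass, not the decisive input.
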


\begin{theorem}\label{thm3}
The Bianchi groups $Bi(m)$ are quasi-reflective for $m=14$, $17$, $23$, $31$ and $39$, and this list is complete. The only quasi-reflective extended Bianchi groups are $\widehat{Bi}(23)$ and $\widehat{Bi}(31)$.
\end{theorem}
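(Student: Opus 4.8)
The plan is to establish both directions of the classification for $Bi(m)$ and $\widehat{Bi}(m)$ together, first reducing to a finite list of candidate fields and then resolving each candidate by explicit computation. Because reflective and quasi-reflective groups are mutually exclusive, the extended case can lean directly on Theorem~\ref{thm2}: a group $\widehat{Bi}(m)$ can only be quasi-reflective when $m \notin \{1,\dots,21, 30, 33, 39\}$, so the reflective classification already removes all the small extended groups (including $m = 14, 17, 39$) from consideration and explains why only $m = 23, 31$ survive there.

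First I would cut the problem down to finitely many values of $m$. Ruzmanov's results dispose of all $m \equiv 1,2 \pmod 4$ and of $m \le 51$ for the groups $Bi(m)$, so the outstanding candidates are square-free $m > 51$ with $m \equiv 3 \pmod 4$. To bound these from above I would invoke the effective finiteness theorem for quasi-reflective Bianchi groups (the analogue of \cite{belolipetsky09} obtained in Section~\ref{QRbackground} by adapting the spectral method of \cite{agol06}), which supplies an explicit upper bound on $m$. Running in parallel, the extension of Vinberg's class-group criterion to the quasi-reflective setting (Section~\ref{reflbackground}) constrains the ideal class group of $K_m = \Q[\sqrt{-m}]$; intersecting the resulting arithmetic conditions with the known class-number tables should leave only a short, explicit list of $m$ to test by hand, and the same machinery (finiteness plus the class-group bound) applies to the extended groups.

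Next, for the groups claimed to be quasi-reflective, I would run Vinberg's algorithm \cite{vinberg72(2)} from a suitable basepoint to build the fundamental polyhedron $P$ of the maximal reflection subgroup. The goal here is not termination (the polyhedron has infinite volume) but rather to exhibit the defining quasi-reflective structure: an ideal vertex whose stabilizer inside the symmetry group $\sym$ acts on the associated horosphere $\mathbb{E}^2$ cocompactly as an infinite group of affine transformations, even though the reflections alone fail to bound a finite-volume neighbourhood of that cusp. I would read this off the Coxeter diagram of $P$, locating the parabolic (Euclidean) subdiagram responsible for the cusp and verifying that $\sym$ completes it to a cocompact affine action while $P$ itself does not close up to finite volume. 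Drawing the diagrams and checking these local conditions confirms quasi-reflectivity for $m = 14, 17, 23, 31, 39$, and for the extensions $\widehat{Bi}(23)$, $\widehat{Bi}(31)$.

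The main obstacle is the negative direction: for each surviving candidate $m$ not on the list, I must show that no quasi-reflective structure exists, which cannot be detected merely by the failure of Vinberg's algorithm to terminate, since both non-reflective and quasi-reflective groups produce infinite-volume polyhedra. Here I would deploy the arithmetic and geometric cusp lemmas of Section~\ref{sec:lemmas}: quasi-reflectivity forces a very rigid cusp structure on the reflection subgroup, essentially a single full-rank affine horospherical symmetry, so exhibiting too many inequivalent cusps, or cusps whose horospherical symmetry groups are too small or of the wrong type, obstructs the parabolic structure. Combining these lemmas with Vinberg's algorithm and the additional known non-reflectivity criterion should eliminate every remaining $m$ and, together with the positive cases, complete the classification of Theorem~\ref{thm3}.
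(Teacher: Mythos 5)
Your pipeline --- the effective finiteness bound of Section~\ref{QRbackground}, the class-group filter of Proposition~\ref{classfilter}, Vinberg's algorithm for the positive cases, and the reduction of the extended case via Theorem~\ref{thm2} (reflectivity and quasi-reflectivity being mutually exclusive) --- is exactly the paper's architecture. But the negative direction, which you rightly call the main obstacle, is underpowered as you describe it. The cusp lemmas (Proposition~\ref{QR-cuspcnt}) are only a preliminary filter: in the paper they still leave $175$ Bianchi and $183$ extended Bianchi candidates standing, so obstructions of the form ``too many inequivalent cusps'' or ``wrong horospherical symmetry'' cannot carry the elimination. The decisive idea, which your proposal never names, is that quasi-reflectivity is incompatible specifically with \emph{loxodromic} symmetries of the infinite Coxeter polyhedron: a loxodromic isometry preserves no horosphere, whereas infinite-order \emph{parabolic} symmetries are precisely what a quasi-reflective group possesses. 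Consequently the ``known non-reflectivity criterion'' you invoke (an infinite-order symmetry of $P$, as in Bugaenko) does not transfer verbatim --- it rules out reflectivity but is perfectly consistent with quasi-reflectivity --- and the entire endgame of the paper consists of producing, for every surviving candidate, an explicit symmetry of the diagram and verifying it is loxodromic (as done for $m=35$ with an explicit integral matrix $g$ preserving $L_m$), and, dually, verifying at $m=23,31$ that the two independent translational symmetries are parabolic with a common fixed point at infinity, which is exactly what certifies rank-$2$ quasi-reflectivity there. Without the loxodromic/parabolic dichotomy your plan has no mechanism to separate $m=23,31$ from, say, $m=35$.

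Two smaller points. For $m=14,17,39$ your direct verification needs a way to decide which mirrors of the polyhedron produced by Vinberg's algorithm lie in $Bi(m)$ rather than merely in $\widehat{Bi}(m)$: the algorithm run on $L_m$ yields the reflection subgroup of $\widehat{Bi}(m)=\mathrm{O}_0(L_m)$, and it is Lemma~\ref{lem:refllengths} (only $2$- and $2m$-reflections lie in $Bi(m)$, detected via spinor norms) that lets the paper see that the excluded reflections are exactly those bounding a single cusp, whence $Bi(m)$ is quasi-reflective of rank $2$; your sketch omits this ingredient. Finally, outsourcing $m\le 51$ and $m\equiv 1,2 \ (\mathrm{mod}\ 4)$ to Ruzmanov is defensible but riskier than the paper's uniform recomputation: the same proceedings volume contains Shaiheev's erroneous claim that $Bi(21)$ is reflective, which this paper corrects, so independent verification of the earlier lists is part of the point of the uniform treatment.
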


The Coxeter diagrams of the groups from Theorems~\ref{thm1} and \ref{thm2} are presented in Figures~\ref{fundamentaldomains}-\ref{m=33}. The diagrams in Figure~\ref{fundamentaldomains} correspond to the groups which previously appeared in the papers by Shaiheev and Ruzmanov (\cite{shaiheev90}, \cite{ruzmanov90}), while the example in Figure~\ref{m=33} is new. We note that there are some mistakes in the data in \cite{shaiheev90} corresponding to $m = 17$ and $21$. In particular, in contrary to the previous claims we find that the group $Bi(21)$ is not reflective. We correct these mistakes in Figure~\ref{fig1} and Table~\ref{fundamentalvectors} in Section~\ref{(Q)reflbianchi}.
\begin{figure}[!ht]
\centering
\input{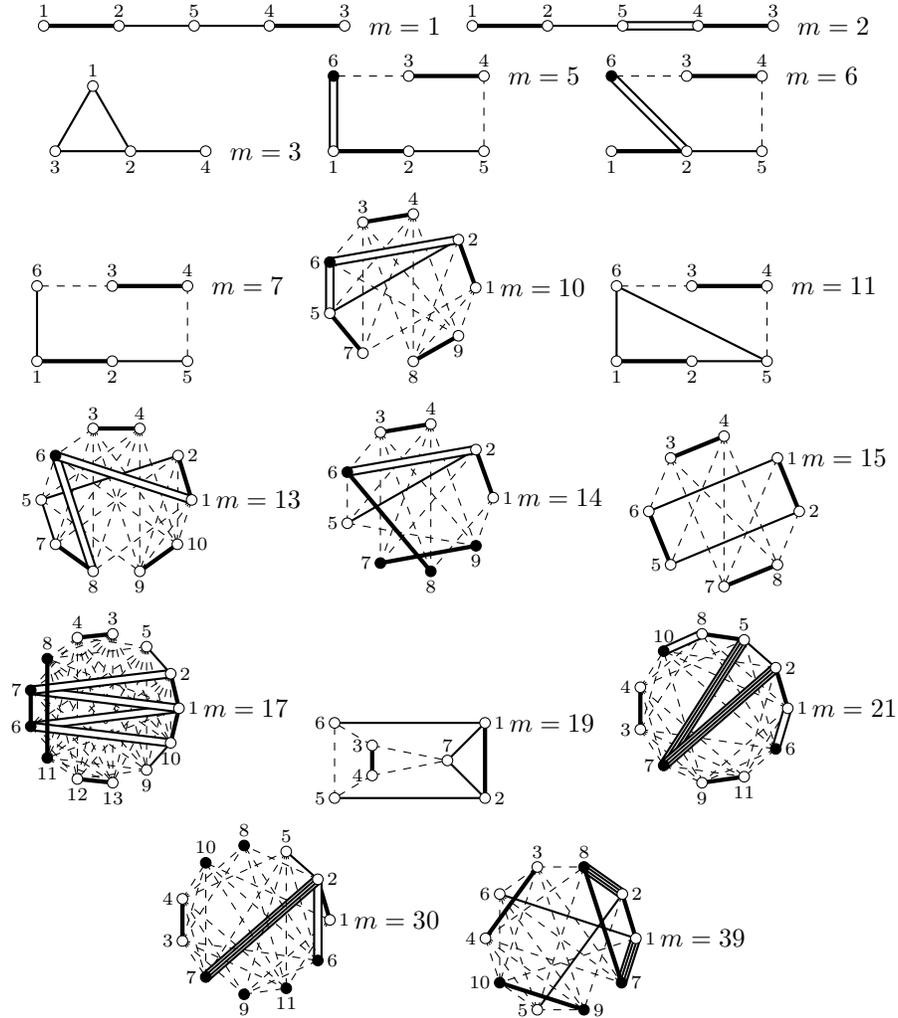}
\caption{\label{fig1} Coxeter diagrams of the fundamental domains of the reflective extended Bianchi groups $\widehat{Bi}(m)$ studied by Shaiheev and Ruzmanov. Vertices that are filled represent reflections which are in the group $\widehat{Bi}(m)$ but not in $Bi(m)$. \label{fundamentaldomains}}
\end{figure}

\begin{figure}[!ht]
\centering
  \begin{tikzpicture}
    \coordinate (one) at (-0.81,1.83);
    \coordinate (two) at (-1.49,1.34);
    \coordinate (three) at (-1.9,0.62);
    \coordinate (four) at (-1.99,-0.21);
    \coordinate (five) at (-1.73,-1);
    \coordinate (ten) at (-1.18,-1.62);
    \coordinate (eight) at (-0.42,-1.96);
    \coordinate (seven) at (0.42,-1.96);
    \coordinate (thirteen) at (1.18,-1.62);
    \coordinate (six) at (1.73,	-1);
    \coordinate (twelve) at (1.99,-0.21);
    \coordinate (fourteen) at (1.9,0.62);
    \coordinate (eleven) at (1.49,1.34);
    \coordinate (fifteen) at (0.81,1.83);
    \coordinate (nine) at (0,2);

    \draw[dashed] (one) -- (seven);
    \draw[dashed] (one) -- (thirteen);
    \draw[dashed] (two) -- (ten);
    \draw[dashed] (two) -- (eleven);
    \draw[dashed] (two) -- (twelve);
    \draw[dashed] (two) -- (thirteen);
    \draw[dashed] (two) -- (fourteen);
    \draw[dashed] (two) -- (fifteen);
    \draw[dashed] (three) -- (six);
    \draw[dashed] (three) -- (seven);
    \draw[dashed] (three) -- (eight);
    \draw[dashed] (three) -- (nine);
    \draw[dashed] (three) -- (ten);
    \draw[dashed] (three) -- (eleven);
    \draw[dashed] (three) -- (twelve);
    \draw[dashed] (three) -- (thirteen);
    \draw[dashed] (three) -- (fourteen);
    \draw[dashed] (three) -- (fifteen);
    \draw[dashed] (four) -- (five);
    \draw[dashed] (four) -- (seven);
    \draw[dashed] (four) -- (eight);
    \draw[dashed] (four) -- (nine);
    \draw[dashed] (four) -- (ten);
    \draw[dashed] (four) -- (eleven);
    \draw[dashed] (four) -- (twelve);
    \draw[dashed] (four) -- (thirteen);
    \draw[dashed] (four) -- (fourteen);
    \draw[dashed] (four) -- (fifteen);
    \draw[dashed] (five) -- (six);
    \draw[dashed] (five) -- (eight);
    \draw[dashed] (five) -- (nine);
    \draw[dashed] (five) -- (eleven);
    \draw[dashed] (five) -- (twelve);
    \draw[dashed] (five) -- (thirteen);
    \draw[dashed] (five) -- (fourteen);
    \draw[dashed] (five) -- (fifteen);
    \draw[dashed] (six) -- (seven);
    \draw[dashed] (six) -- (eight);
    \draw[dashed] (six) -- (ten);
    \draw[dashed] (six) -- (eleven);
    \draw[dashed] (six) -- (fourteen);
    \draw[dashed] (six) -- (fifteen);
    \draw[dashed] (seven) -- (nine);
    \draw[dashed] (seven) -- (eleven);
    \draw[dashed] (seven) -- (twelve);
    \draw[dashed] (seven) -- (thirteen);
    \draw[dashed] (seven) -- (fourteen);
    \draw[dashed] (seven) -- (fifteen);
    \draw[dashed] (eight) -- (twelve);
    \draw[dashed] (eight) -- (thirteen);
    \draw[dashed] (eight) -- (fourteen);
    \draw[dashed] (eight) -- (fifteen);
    \draw[dashed] (nine) -- (ten);
    \draw[dashed] (nine) -- (twelve);
    \draw[dashed] (ten) -- (eleven);
    \draw[dashed] (ten) -- (twelve);
    \draw[dashed] (ten) -- (thirteen);
    \draw[dashed] (ten) -- (fourteen);
    \draw[dashed] (ten) -- (fifteen);
    \draw[dashed] (eleven) -- (twelve);
    \draw[dashed] (eleven) -- (thirteen);
    \draw[dashed] (eleven) -- (fourteen);
    \draw[dashed] (twelve) -- (fifteen);
    \draw[dashed] (thirteen) -- (fifteen);

    \draw[ultra thick] (one) -- (two);
    \draw[thick, double distance = 2pt] (one) -- (six);
    \draw[thick, double distance = 2pt] (one) -- (eight);
    \draw[ultra thick] (one) -- (nine);
    \draw[thick] (two) -- (five);
    \draw[thick, double distance = 2pt] (two) -- (eight);
    \draw[thick] (two) -- (nine);
    \draw[ultra thick] (three) -- (four);
    \draw[thick, double distance = 2pt] (six) -- (nine);
    \draw[thick] (nine) -- (fourteen);
    \draw[ultra thick] (eleven) -- (fifteen);

    \filldraw[fill=white] (one) circle (2pt) node[above left] {\scriptsize 1};
    \filldraw[fill=white] (two) circle (2pt) node[above left] {\scriptsize 2};
    \filldraw[fill=white] (three) circle (2pt) node[left] {\scriptsize 3};
    \filldraw[fill=white] (four) circle (2pt) node[left] {\scriptsize 4};
    \filldraw[fill=white] (five) circle (2pt) node[below left] {\scriptsize 5};
    \filldraw (six) circle (2pt) node[below right] {\scriptsize 6};
    \filldraw (seven) circle (2pt) node[below] {\scriptsize 7};
    \filldraw (eight) circle (2pt) node[below] {\scriptsize 8};
    \filldraw[fill=white] (nine) circle (2pt) node[above] {\scriptsize 9};
    \filldraw (ten) circle (2pt) node[below left] {\scriptsize 10};
    \filldraw[fill=white] (eleven) circle (2pt) node[above right] {\scriptsize 11};
    \filldraw (twelve) circle (2pt) node[right] {\scriptsize 12};
    \filldraw (thirteen) circle (2pt) node[below right] {\scriptsize 13};
    \filldraw[fill=white] (fourteen) circle (2pt) node[right] {\scriptsize 14};
    \filldraw[fill=white] (fifteen) circle (2pt) node[above right] {\scriptsize 15};

  \end{tikzpicture}
\caption{\label{fig2} Coxeter diagram of the fundamental domain of the reflection subgroup of $\widehat{Bi}(33)$. The filled vertices represent reflections in $\widehat{Bi}(33)$ but not in $Bi(33)$. \label{m=33}}
\end{figure}
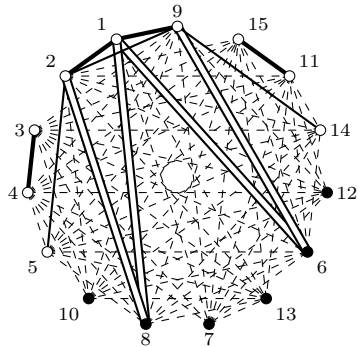

In Figure~\ref{Bi(23,31)} we present the parts of the infinite Coxeter diagrams of reflection subgroups in the groups $\widehat{Bi}(23)$ and $\widehat{Bi}(31)$ from Theorem~\ref{thm3}. A complete diagram of a corresponding reflection polyhedra can be obtained by tessellating the plane by the shaded parts of these diagrams. (The corresponding translations can be understood as the symmetries of the fundamental polyhedron in $\Hy^3$ which preserve a horosphere and act on it by affine translations.) We note that for the remaining $m=14$, $17$ and $39$ the extended Bianchi groups are reflective and hence the diagrams for the reflective subgroups of the groups $Bi(m)$ can be obtained from the corresponding diagrams for $\widehat{Bi}(m)$ by developing them around the cusp whose reflections are not in $Bi(m)$.

\begin{figure}[!ht]
\centering
\resizebox{36em}{!}{
  \input{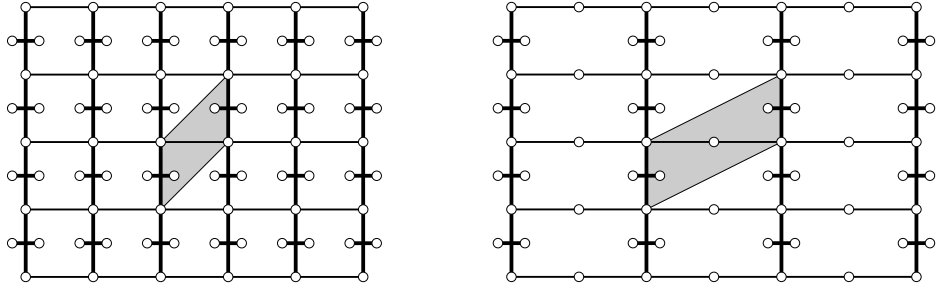}
  \hskip4em
  \begin{tikzpicture}
    \coordinate (one) at (0, 0);
    \coordinate (two) at (0, 1);
    \coordinate (three) at (-7, 3);
    \coordinate (four) at (-7, 2);
    \coordinate (five) at (1, 1);
    \coordinate (six) at (-1, 0);
    \coordinate (seven) at (2, 2);
    \coordinate (eight) at (-2, -1);
    \coordinate (nine) at (2, 1);
    \coordinate (ten) at (-2,0);
    \coordinate (eleven) at (4, 3);
    \coordinate (twelve) at (-4, -2);
    \coordinate (thirteen) at (1, 2);
    \coordinate (fourteen) at (-1, -1);
    \coordinate (fifteen) at (0, -1);
    \coordinate (sixteen) at (0, 2);    
    \coordinate (seventeen) at (-7, 1);    
    \coordinate (eighteen) at (3, 2);
    \coordinate (nineteen) at (2, 3);
    \coordinate (twenty) at (-2, -2);
    \coordinate (twentyone) at (-3, -1);
    \coordinate (twentytwo) at (4, 2);
    \coordinate (twentythree) at (-4, -1);
    \coordinate (twentyfour) at (1, 0);
    \coordinate (twentyfive) at (-1, 1);
    \coordinate (twentysix) at (2, 0);    
    \coordinate (twentyseven) at (-2, 1);    
    \coordinate (twentyeight) at (4, 4);    
    \coordinate (twentynine) at (-4, -3);    
    \coordinate (thirty) at (3, 3);    
    \coordinate (thirtyone) at (-3, -2);    
    \coordinate (thirtytwo) at (6, 4);    
    \coordinate (thirtythree) at (-6, -3);    
    \coordinate (thirtyfour) at (4, 1);    
    \coordinate (thirtyfive) at (-4, 0);    
    \coordinate (thirtysix) at (3, 1);    
    \coordinate (thirtyseven) at (6, 3);    
    \coordinate (thirtyeight) at (-6, -2);    
    \coordinate (thirtynine) at (-3, 0);    
    \coordinate (forty) at (6, 5);    
    \coordinate (fortyone) at (-6, -4);    
    \coordinate (fortytwo) at (2, 4);    
    \coordinate (fortythree) at (-2, -3);    
    \coordinate (fortyfour) at (0, -2);    
    \coordinate (fortyfive) at (0, 3);
    \coordinate (fortysix) at (5, 3);    
    \coordinate (fortyseven) at (5, 4);    
    \coordinate (e1) at (1, 3);    
    \coordinate (e2) at (-1, 2);    
    \coordinate (e3) at (-1, 3);    
    \coordinate (e4) at (-2, 2);    
    \coordinate (e5) at (-2, 3);    
    \coordinate (e6) at (3, 0);    
    \coordinate (e7) at (4, 0);    
    \coordinate (e8) at (1, -1);    
    \coordinate (e9) at (2, -1);    
    \coordinate (e10) at (3, -1);    
    \coordinate (e11) at (4, -1);    
    \coordinate (a1) at (-2.2, 1.5);    
    \coordinate (a2) at (-1.8, 1.5);    
    \coordinate (a3) at (-1.2, 1.5);    
    \coordinate (a4) at (-0.8, 1.5);    
    \coordinate (a5) at (-0.2, 1.5);    
    \coordinate (a6) at (0.2, 1.5);    
    \coordinate (a7) at (0.8, 1.5);    
    \coordinate (a8) at (1.2, 1.5);    
    \coordinate (a9) at (1.8, 1.5);    
    \coordinate (a10) at (2.2, 1.5);    
    \coordinate (a11) at (3.8, 1.5);    
    \coordinate (a12) at (4.2, 1.5);    
    \coordinate (b1) at (-2.2, 0.5);    
    \coordinate (b2) at (-1.8, 0.5);    
    \coordinate (b3) at (-1.2, 0.5);    
    \coordinate (b4) at (-0.8, 0.5);    
    \coordinate (b5) at (-0.2, 0.5);    
    \coordinate (b6) at (0.2, 0.5);    
    \coordinate (b7) at (0.8, 0.5);    
    \coordinate (b8) at (1.2, 0.5);    
    \coordinate (b9) at (1.8, 0.5);    
    \coordinate (b10) at (2.2, 0.5);    
    \coordinate (b11) at (3.8, 0.5);    
    \coordinate (b12) at (4.2, 0.5);    
    \coordinate (c1) at (-2.2, -0.5);    
    \coordinate (c2) at (-1.8, -0.5);    
    \coordinate (c3) at (-1.2, -0.5);    
    \coordinate (c4) at (-0.8, -0.5);    
    \coordinate (c5) at (-0.2, -0.5);    
    \coordinate (c6) at (0.2, -0.5);    
    \coordinate (c7) at (0.8, -0.5);    
    \coordinate (c8) at (1.2, -0.5);    
    \coordinate (c9) at (1.8, -0.5);    
    \coordinate (c10) at (2.2, -0.5);    
    \coordinate (c11) at (3.8, -0.5);    
    \coordinate (c12) at (4.2, -0.5);    
    \coordinate (d1) at (-2.2, 2.5);    
    \coordinate (d2) at (-1.8, 2.5);    
    \coordinate (d3) at (-1.2, 2.5);    
    \coordinate (d4) at (-0.8, 2.5);    
    \coordinate (d5) at (-0.2, 2.5);    
    \coordinate (d6) at (0.2, 2.5);    
    \coordinate (d7) at (0.8, 2.5);    
    \coordinate (d8) at (1.2, 2.5);    
    \coordinate (d9) at (1.8, 2.5);    
    \coordinate (d10) at (2.2, 2.5);    
    \coordinate (d11) at (3.8, 2.5);    
    \coordinate (d12) at (4.2, 2.5);

\filldraw[fill=black!20!white] (one) -- (two) -- (seven) -- (nine) -- cycle;
    
\draw[ultra thick] (one) -- (two);
\draw[thick] (one) -- (six);
\draw[ultra thick] (one) -- (fifteen);
\draw[thick] (one) -- (twentyfour);
\draw[thick] (two) -- (five);
\draw[ultra thick] (two) -- (sixteen);
\draw[thick] (two) -- (twentyfive);
\draw[thick] (five) -- (nine);
\draw[thick] (six) -- (ten);
\draw[ultra thick] (seven) -- (nine);
\draw[thick] (seven) -- (thirteen);
\draw[thick] (seven) -- (eighteen);
\draw[ultra thick] (seven) -- (nineteen);
\draw[ultra thick] (eight) -- (ten);
\draw[thick] (eight) -- (fourteen);
\draw[ultra thick] (nine) -- (twentysix);
\draw[thick] (nine) -- (thirtysix);
\draw[ultra thick] (ten) -- (twentyseven);
\draw[ultra thick] (eleven) -- (twentytwo);
\draw[thick] (eleven) -- (thirty);
\draw[thick] (thirteen) -- (sixteen);
\draw[thick] (fourteen) -- (fifteen);
\draw[ultra thick] (sixteen) -- (fortyfive);
\draw[thick] (eighteen) -- (twentytwo);
\draw[thick] (nineteen) -- (thirty);
\draw[ultra thick] (twentytwo) -- (thirtyfour);
\draw[thick] (twentyfour) -- (twentysix);
\draw[thick] (twentyfive) -- (twentyseven);
\draw[thick] (thirtyfour) -- (thirtysix);
\draw[thick] (e1) -- (fortyfive);
\draw[thick] (e1) -- (nineteen);
\draw[thick] (e3) -- (fortyfive);
\draw[thick] (e2) -- (sixteen);
\draw[ultra thick] (e4) -- (e5);
\draw[ultra thick] (e4) -- (twentyseven);
\draw[thick] (e3) -- (e5);
\draw[thick] (e4) -- (e2);
\draw[thick] (fifteen) -- (e8);
\draw[thick] (e8) -- (e9);
\draw[thick] (e9) -- (e10);
\draw[thick] (e10) -- (e11);
\draw[ultra thick] (e9) -- (twentysix);
\draw[ultra thick] (e11) -- (e7);
\draw[ultra thick] (e7) -- (thirtyfour);
\draw[thick] (e6) -- (twentysix);
\draw[thick] (e6) -- (e7);

\draw[ultra thick] (a1) -- (a2);
\draw[ultra thick] (a5) -- (a6);
\draw[ultra thick] (a9) -- (a10);
\draw[ultra thick] (a11) -- (a12);

\draw[ultra thick] (b1) -- (b2);
\draw[ultra thick] (b5) -- (b6);
\draw[ultra thick] (b9) -- (b10);
\draw[ultra thick] (b11) -- (b12);

\draw[ultra thick] (c1) -- (c2);
\draw[ultra thick] (c5) -- (c6);
\draw[ultra thick] (c9) -- (c10);
\draw[ultra thick] (c11) -- (c12);

\draw[ultra thick] (d1) -- (d2);
\draw[ultra thick] (d5) -- (d6);
\draw[ultra thick] (d9) -- (d10);
\draw[ultra thick] (d11) -- (d12);

    \filldraw[fill=white] (one) circle (2pt);
    \filldraw[fill=white] (two) circle (2pt);
    \filldraw[fill=white] (five) circle (2pt);
    \filldraw[fill=white] (six) circle (2pt);
    \filldraw[fill=white] (seven) circle (2pt);
    \filldraw[fill=white] (eight) circle (2pt);
    \filldraw[fill=white] (nine) circle (2pt) ;
    \filldraw[fill=white] (ten) circle (2pt) ;
    \filldraw[fill=white] (eleven) circle (2pt);
    \filldraw[fill=white] (thirteen) circle (2pt);
    \filldraw[fill=white] (fourteen) circle (2pt);
    \filldraw[fill=white] (fifteen) circle (2pt) ;
    \filldraw[fill=white] (sixteen) circle (2pt) ;
    \filldraw[fill=white] (eighteen) circle (2pt);
    \filldraw[fill=white] (nineteen) circle (2pt);
    \filldraw[fill=white] (twentytwo) circle (2pt);
    \filldraw[fill=white] (twentyfour) circle (2pt);
    \filldraw[fill=white] (twentyfive) circle (2pt) ;
    \filldraw[fill=white] (twentysix) circle (2pt) ;
    \filldraw[fill=white] (twentyseven) circle (2pt);
    \filldraw[fill=white] (thirty) circle (2pt) ;
    \filldraw[fill=white] (thirtyfour) circle (2pt);
    \filldraw[fill=white] (thirtysix) circle (2pt);
    \filldraw[fill=white] (fortyfive) circle (2pt);
    \filldraw[fill=white] (e1) circle (2pt);
    \filldraw[fill=white] (e2) circle (2pt);
    \filldraw[fill=white] (e3) circle (2pt);
    \filldraw[fill=white] (e4) circle (2pt);
    \filldraw[fill=white] (e5) circle (2pt);
    \filldraw[fill=white] (e6) circle (2pt);
    \filldraw[fill=white] (e7) circle (2pt);
    \filldraw[fill=white] (e8) circle (2pt);
    \filldraw[fill=white] (e9) circle (2pt);
    \filldraw[fill=white] (e10) circle (2pt);
    \filldraw[fill=white] (e11) circle (2pt);
    \filldraw[fill=white] (a1) circle (2pt);
    \filldraw[fill=white] (a2) circle (2pt);
    \filldraw[fill=white] (a5) circle (2pt);
    \filldraw[fill=white] (a6) circle (2pt);
    \filldraw[fill=white] (a9) circle (2pt);
    \filldraw[fill=white] (a10) circle (2pt);
    \filldraw[fill=white] (a11) circle (2pt);
    \filldraw[fill=white] (a12) circle (2pt);
    \filldraw[fill=white] (b1) circle (2pt);
    \filldraw[fill=white] (b2) circle (2pt);
    \filldraw[fill=white] (b5) circle (2pt);
    \filldraw[fill=white] (b6) circle (2pt);
    \filldraw[fill=white] (b9) circle (2pt);
    \filldraw[fill=white] (b10) circle (2pt);
    \filldraw[fill=white] (b11) circle (2pt);
    \filldraw[fill=white] (b12) circle (2pt);
    \filldraw[fill=white] (c1) circle (2pt);
    \filldraw[fill=white] (c2) circle (2pt);
    \filldraw[fill=white] (c5) circle (2pt);
    \filldraw[fill=white] (c6) circle (2pt);
    \filldraw[fill=white] (c9) circle (2pt);
    \filldraw[fill=white] (c10) circle (2pt);
    \filldraw[fill=white] (c11) circle (2pt);
    \filldraw[fill=white] (c12) circle (2pt);
    \filldraw[fill=white] (d1) circle (2pt);
    \filldraw[fill=white] (d2) circle (2pt);
    \filldraw[fill=white] (d5) circle (2pt);
    \filldraw[fill=white] (d6) circle (2pt);
    \filldraw[fill=white] (d9) circle (2pt);
    \filldraw[fill=white] (d10) circle (2pt);
    \filldraw[fill=white] (d11) circle (2pt);
    \filldraw[fill=white] (d12) circle (2pt);

\end{tikzpicture}
}
\caption{\label{Bi(23,31)} Partial Coxeter diagrams of the reflection subgroups of the quasi-reflective Bianchi groups $\widehat{Bi}(23)$ and $\widehat{Bi}(31)$. (Broken line branches intentionally omitted.)}
\end{figure}

\section{Hermitian model for the hyperbolic 3-space, Bianchi groups and Coxeter polyhedra}\label{background}
Consider the space $H_2$ of second-order Hermitian matrices and define a quadratic form $f$ on $H_2$ by the formula $f(x) = -2 \,\text{det}\, x$. The quadratic form $f$ has signature $(3,1)$, therefore it defines on $H_2$ the structure of Lorentzian $4$-space. Let $H_2^+$ denote the cone of positive definite matrices that are in one of the two connected components of the cone of all $x\in H_2$ with $f(x)<0$. The hyperbolic $3$-space $\Hy^3$ can be represented as the quotient $H_2^+ / \mathbb{R}_+$, where the group of positive real numbers $\mathbb{R}_+$ acts on $H_2$ by homotheties.

The transformations
\begin{equation}\label{transformation}
g(x) = \frac{1}{|\text{det } g|} gxg^* \; (g \in \mathrm{GL}_2(\mathbb{C})),
\end{equation}
where $^*$ denotes the Hermitian transpose, are pseudo-orthogonal transformations of the space $H_2$ that preserve the cone $H_2^+$. The orientation preserving motions of $\Hy^3$ are induced by these transformations $g$, and the orientation reversing motions are induced by compositions of $g$ with the complex conjugation $\tau$.  Therefore, the group of isometries of the hyperbolic 3-space in this model is the group $\mathrm{PGL}_2(\mathbb{C}) \rtimes \langle \tau \rangle$, and furthermore its discrete subgroups $Bi(m)$ and $\widehat{Bi}(m)$ are discrete groups of isometries of $\Hy^3$.

Under the action on the space $H_2$ the group $Bi(m)$ preserves the lattice $L_m$ which consists of the matrices with the entries in $\Am$. Let $\mathrm{O}_0(L_m)$ be the group of all pseudo-orthogonal transformations of the space $H_2$ that preserve the lattice $L_m$ and the cone $H_2^+$. It is an arithmetic subgroup of $\isom$, and Vinberg showed that in fact $\mathrm{O}_0(L_m) = \widehat{Bi}(m)$ \cite{vinberg90}. This implies in particular that the groups $\widehat{Bi}(m)$ and $Bi(m)$ have finite covolume.

Following Shaiheev \cite{shaiheev90}, we can choose a basis of $H_2$ in which the elements $x \in L_m$ are given by
\begin{equation}\label{basis}
  x = \left\{
  \begin{array}{ll}
    \begin{pmatrix}
      x_1 & x_3 - \sqrt{-m}x_4 \\
      x_3 + \sqrt{-m}x_4 & x_2
    \end{pmatrix} &  \; \text{ if } m \equiv 1,2 \text{ (mod 4)},\\
    \begin{pmatrix}
      x_1 & x_3 + \frac{1-\sqrt{-m}}{2}x_4 \\
      x_3 + \frac{1+\sqrt{-m}}{2}x_4 & x_2
    \end{pmatrix} &  \; \text{ if } m \equiv 3 \text{ (mod 4)},
   \end{array}\right.
\end{equation}
where $x_i \in \mathbb{Z}$. We see that, in these coordinates, $f$ is written as
\begin{equation}\label{form_f}
  f = \left\{
    \begin{array}{ll}
    -2x_1x_2 + 2x_3^2 + 2mx_4^2 &  \;  \text{ if } m \equiv 1,2 \text{ (mod 4)},\\
    -2x_1x_2 + 2x_3^2 + 2x_3x_4 + \frac{m+1}{2}x_4^2 &  \;  \text{ if } m \equiv 3 \text{ (mod 4)}.
   \end{array}\right.
\end{equation}

In our model of $\Hy^3$, a hyperplane is given by the set of rays in $H_2^+$ which are orthogonal to a vector $e$ of positive length. A hyperplane $\Pi_e$ defines two \emph{halfspaces}, $\Pi_e^+$ and $\Pi_e^-$, where $'\pm'$ is the sign of $(e,x)$ for $x$ in the corresponding halfspace, and a \emph{reflection}
\begin{equation*}
R_e: x \to x - 2\frac{(e,x)}{(e,e)}e,
\end{equation*}
where the inner product $(u,v) = \frac{1}{2}(f(u+v)-f(u)-f(v))$ is induced by $f$.
For brevity, a hyperplane associated to a vector $e_i$ will be denoted $\Pi_i$.

The vector $e$ which correspond to the reflection $R_e$ is defined up to scaling. If $e$ has rational coordinates we can normalise it so that the coordinates are coprime integers. With this normalisation we can assign to $R_e$ a correctly defined number $k = (e,e)$ and call $R_e$ a \emph{$k$-reflection}. Note that $k$ represents the spinor norm of $R_e$ (cf. \cite[p.~160]{EGM} for further discussion).

As $\widehat{Bi}(m) = \mathrm{O}_0(L_m)$ it follows that each reflection $R_e$ in $\widehat{Bi}(m)$ must satisfy the \emph{crystallographic condition:}
\begin{equation}\label{crystcondition}
\frac{2(e, x)}{(e, e)} \in \mathbb{Z}\ \text{ for any } x \in L_m.
\end{equation}

\medskip

An algorithm developed by Vinberg in \cite{vinberg72(2)} allows to construct a fundamental polyhedron of the maximal hyperbolic reflection subgroup of the integral automorphism group of a quadratic form. Let us briefly recall the procedure.

We begin by considering the stabiliser subgroup of a vector $u_0 \in L_m$ which corresponds to a point $x_0\in \overline{\Hy^3}$ (note that we allow $x_0\in\partial\Hy^3$). The \emph{polyhedral angle} at $x_0$ is defined by
\[P_0 = \bigcap_{i=1}^k \Pi_i^-,\]
with all the halfspaces being \emph{essential} (not wholly contained within another halfspace). There is a unique fundamental polyhedron $P$ of the reflection subgroup which sits inside $P_0$ and has $x_0$ as a vertex.

The algorithm continues by constructing further $\Pi_i$ such that
\[P = \bigcap_{i} \Pi_i^-.\]
This is done by choosing $e_i\in L_m$ such that $(e_i, e_i) > 0$, $(e_i, e_j) \le 0$ for all $j < i$, and the distance between $x_0$ and $\Pi_i$ is the smallest possible. The latter condition implies that all the $\Pi_i$s are essential.

We recall that the distance $\dist(x_0,\Pi_i)$ is related to the following function:
\begin{equation*}
\rho(x_0,\Pi_i) = \left|\frac{(e_i,u_0)}{\sqrt{(e_i, e_i)}}\right|.
\end{equation*}
Indeed, if $x_0\in\Hy^3$, we have $\sinh(\dist(x_0,\Pi_i)) = \rho(x_0,\Pi_i)/\sqrt{-(u_0, u_0)}$.
We will often refer to $\rho(x_0,\Pi_i)$ as the \emph{weight} of the plane $\Pi_i$ (and the vector $e_i$) with respect to $x_0$.

The algorithm terminates if it generates a configuration $P$  that has finite volume, in which case $\widehat{Bi}(m)$ is reflective. The corresponding Bianchi group is reflective if all the reflections produced by the algorithm that are not in $Bi(m)$ generate a finite subgroup of isometries (cf. Lemma~\ref{lem:sha}).

\medskip

We shall illustrate the algorithm by the following lemma. Let us fix $u_0 = (1,0,0,0)$. This vector is isotropic, so its stabiliser subgroup preserves a point at infinity. If $m\neq 3$, the corresponding stabiliser subgroup consists of the reflections in hyperplanes defined by the following vectors (cf.~\cite{shaiheev90}):
\begin{table}[H]
\centering
\begin{tabular}{l|l}
  $e_1$ & $(0, 0, -1, 0)$\\
  $e_2$ & $(1, 0, 1, 0)$\\
  $e_3$ & $(0, 0, 0, -1) \text{ for } m \equiv 1, 2\text{ (mod 4)}; \text{ or } (0, 0, 1, -2) \text{ for } m \equiv 3\text{ (mod 4)}$\\
  $e_4$ & $(m, 0, 0, 1) \text{ for } m \equiv 1, 2\text{ (mod 4)}; \text{ or } (m, 0, -1, 2) \text{ for } m \equiv 3\text{ (mod 4)}$
\end{tabular}
\end{table}

In the group $Bi(3)$ the fundamental domain of the reflection subgroup has four faces (see Figure~\ref{fundamentaldomains}). The normal vectors corresponding to these reflections can be found in \cite{shaiheev90} and are different from those listed above. As this is a configuration with finite volume the algorithm terminates, and hence we exclude $m=3$. We now look for the next vector produced by Vinberg's algorithm in all other cases.

\begin{lemma}\label{e5inlattice}
For every $m \neq 3$, we have $e_5 = (-1, 1, 0, 0)$.
\end{lemma}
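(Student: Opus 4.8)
The plan is to carry out one step of Vinberg's algorithm by hand, the crucial input being the crystallographic condition \eqref{crystcondition}, which I will use to produce a lower bound on the weight that singles out $e_5$. Recall that $e_5$ must be a primitive vector of $L_m$ with $(e_5,e_5)>0$ and $(e_5,e_j)\le 0$ for $j=1,\dots,4$, chosen so that the weight $\rho(x_0,\Pi_{e_5})=|(e_5,u_0)|/\sqrt{(e_5,e_5)}$ is minimal among all such vectors defining a hyperplane distinct from $\Pi_1,\dots,\Pi_4$. Writing a candidate as $e=(a,b,c,d)$, a direct computation with the bilinear form associated to \eqref{form_f} gives $(e,u_0)=-b$ in both congruence cases, so that $\rho=|b|/\sqrt{(e,e)}$. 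In particular the four cusp walls $e_1,\dots,e_4$ all have $b=0$ and hence weight $0$, so any genuinely new wall satisfies $b\neq 0$, i.e.\ $|b|\ge 1$.

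The key step is the lower bound on $\rho$. Since $u_0=(1,0,0,0)\in L_m$, applying \eqref{crystcondition} with $x=u_0$ shows that $(e,e)$ divides $2(e,u_0)=-2b$. As $(e,e)>0$ and $b\neq 0$, this forces $(e,e)\le 2|b|$, whence
\[
\rho=\frac{|b|}{\sqrt{(e,e)}}\ \ge\ \frac{|b|}{\sqrt{2|b|}}=\sqrt{\tfrac{|b|}{2}}\ \ge\ \frac{1}{\sqrt 2},
\]
with equality possible only when $|b|=1$ and $(e,e)=2$. This is the part I regard as the heart of the argument: the degeneracy of $u_0$ (it is isotropic, so the usual $\sinh$-distance formula is unavailable) is precisely what makes the crystallographic divisibility the right tool for bounding the weight.

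It then remains to produce a vector realizing this bound and to verify admissibility. I would check that $e_5=(-1,1,0,0)$ is primitive in $L_m$ with $b=1$ and $(e_5,e_5)=2$ in both cases, so $\rho(x_0,\Pi_{e_5})=1/\sqrt2$ is minimal, and that it satisfies the acute-angle conditions: a short computation in each case of \eqref{form_f} gives $(e_5,e_1)=(e_5,e_3)=0$, $(e_5,e_2)=-1$, $(e_5,e_4)=-m$, all $\le 0$. To confirm that the algorithm actually outputs this vector, I would solve the equality system $|b|=1$, $(e,e)=2$ subject to the four inequalities $(e,e_j)\le 0$: for $m\equiv 1,2\pmod 4$ the inequalities force $b=1$, $c=0$, $d=0$, leaving $(-1,1,0,0)$ as the unique solution.

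The one real obstacle is the case $m\equiv 3\pmod 4$, where solving the same system yields a \emph{second} minimal-weight vector, namely $\big(\tfrac{m-3}{4},1,0,1\big)$ (which is integral precisely because $m\equiv 3\pmod4$). I would resolve this by computing $\big(e_5,\big(\tfrac{m-3}{4},1,0,1\big)\big)=\tfrac{7-m}{4}\le 0$, so the two walls meet at a non-acute angle and both may legitimately be recorded at this weight level; we simply designate $(-1,1,0,0)$ as $e_5$, which has the virtue of being uniform across all $m\neq 3$ and matching the tables of Shaiheev used later. Thus the statement holds with $e_5=(-1,1,0,0)$ for every $m\neq 3$.
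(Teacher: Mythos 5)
Your proof is correct, and at its core it performs the same computation as the paper's: one step of Vinberg's algorithm from the isotropic vector $u_0$, with the crystallographic condition supplying the bound $(e,e)\le 2|b|$ and hence $\rho\ge 1/\sqrt{2}$, attained only at $|b|=1$, $(e,e)=2$. Still, there are two genuine differences worth recording. First, where you apply \eqref{crystcondition} directly at $x=u_0$, the paper applies it at $x=e_1$ and $x=e_2$ and combines the two resulting integrality constraints to get $|(\textbf{x},\textbf{x})|\le|2x_2|$; since $e_1+e_2=u_0$, these are literally the same bound, but your version is the cleaner formulation of the idea (your justification that $b\neq 0$ via positivity of the weight also differs harmlessly from the paper's, which notes that $x_2=0$ forces the inequalities to return the isotropic $u_0$; either way the wall inequalities give $b\ge 0$, so $|b|=1$ means $b=1$). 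Second, and more substantively, the paper works through only $m\equiv 1,2\pmod 4$ and dismisses $m\equiv 3\pmod 4$ with ``treated similarly and we skip the details''; you actually carry out that case and correctly discover that it is not quite similar: the minimal weight $1/\sqrt{2}$ is attained twice, by $(-1,1,0,0)$ and by $\bigl(\tfrac{m-3}{4},1,0,1\bigr)$ (integral exactly when $m\equiv 3\pmod 4$), so the label $e_5$ involves a tie-break. Your resolution is the right one: both roots are legitimate outputs of the algorithm at this weight, since their mutual inner product $\tfrac{7-m}{4}\le 0$ for all $m\equiv 3\pmod 4$ with $m\neq 3$, and designating $(-1,1,0,0)$ as $e_5$ is then a harmless convention consistent with the vector tables used later (indeed $\bigl(\tfrac{m-3}{4},1,0,1\bigr)$ simply appears as a subsequent root). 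Your admissibility checks $(e_5,e_1)=(e_5,e_3)=0$, $(e_5,e_2)=-1$, $(e_5,e_4)=-m$ are consistent with \eqref{form_f} in both congruence classes. In short, your argument matches the paper's where they overlap, fills in the case the paper skips, and records a genuine subtlety (non-uniqueness of the minimal-weight root for $m\equiv 3\pmod 4$) that the paper's phrasing conceals.
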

\begin{proof}
Assume that $m \equiv 1, 2\text{ (mod 4)}$. We know the first four vectors in $L_m$, and that all subsequent vectors must have non-positive inner product with them, so we have four inequalities which constrain the coefficients of the remaining vectors. Let $\textbf{x}= (x_1,x_2,x_3,x_4)$ be the first vector that is to be found by the algorithm. The inequalities can be summarised as follows:
\begin{eqnarray*}
x_2 \ge 2x_3 \ge 0,\\
mx_2 \ge 2mx_4 \ge 0.
\end{eqnarray*}

The weight function $\rho$ of $\textbf{x}$ is given by
\begin{equation*}
\rho(u_0, \textbf{x}) = \frac{x_2}{\sqrt{(\textbf{x},\textbf{x})}},
\end{equation*}
which we want to minimise, so we can try choosing $x_2$ as small as possible. If $x_2 = 0$ then by the above inequalities we recover the isotropic vector $u_0$ (up to a scalar multiple), so we choose $x_2 = 1$, and hence $x_3 = x_4 = 0$. Now $(\textbf{x}, \textbf{x}) = -2x_1$, so $x_1$ must be negative, and by considering the crystallographic condition with respect to $e_2$ we can conclude that $x_1 = -1$. Therefore, our vector $\textbf{x}$ has length 2 and $\rho(u_0, \textbf{x}) = \frac{1}{\sqrt{2}}$.

That this is actually minimal can be confirmed by considering the crystallographic conditions associated to the vectors $e_1$ and $e_2$:
\begin{equation*}
\frac{2(\textbf{x}, e_1)}{(\textbf{x},\textbf{x})} = \frac{-2(x_4 + 2x_3)}{(\textbf{x},\textbf{x})} \in \mathbb{Z}; \; \frac{2(\textbf{x}, e_2)}{(\textbf{x},\textbf{x})} = \frac{-2x_2 + 2(x_4 + 2x_3)}{(\textbf{x},\textbf{x})} \in \mathbb{Z},
\end{equation*}
which imply that $|(\textbf{x},\textbf{x})| \le |2x_2|$. We are therefore searching for a solution to the following inequality
\begin{equation*}
\frac{x_2}{\sqrt{(\textbf{x},\textbf{x})}} \le \frac{1}{\sqrt{2}},
\end{equation*}
which, given that $x_2$ is strictly positive, implies that $x_2 = 1$.

The case $m \equiv 3\text{ (mod 4)}$, $m\neq 3$, is treated similarly and we skip the details.
\end{proof}

If the algorithm terminates then $P$ is an acute-angled polyhedron, and in fact it is a \emph{Coxeter polyhedron}. Recall that an acute-angled polyhedron is called a Coxeter polyhedron if all the dihedral angles at the intersections of pairs of faces are integer submultiples of $\pi$ (or zero). A complete presentation of an acute-angled polyhedron is given by a \emph{Gram matrix}. A Gram matrix $G = (g_{ij})$ is a symmetric matrix with entries:
\begin{equation*}
  g_{ij} = \left\{
  \begin{array}{ll}
    1 &  \; \text{ if } \; i = j,\\
    -\cos(\frac{\pi}{n}) &  \; \text{ if } \; \angle (\Pi_i, \Pi_j) = \frac{\pi}{n},\\
    -1 &  \; \text{ if } \; \angle (\Pi_i, \Pi_j) = 0,\\
    -\cosh(\dist(\Pi_i, \Pi_j)) &  \; \text{ if $\Pi_i$ and $\Pi_j$ do not intersect,}
   \end{array}\right.
\end{equation*}
where $\dist(\Pi_i, \Pi_j)$ is the minimal hyperbolic distance between the two hyperplanes. The entries of the Gram matrix may be computed directly from the normal vectors to the hyperplanes $\Pi_i$ as
\[g_{ij}=\frac{(e_i, e_j)}{\sqrt{(e_i, e_i)(e_j,e_j)}}.\]

Another presentation of an acute-angled polyhedron which we are going to use is the \emph{Coxeter diagram}. It is a graph which reproduces most of the information in the Gram matrix, with the exception of the distances between non-intersecting planes. Each vertex of a Coxeter diagram corresponds to a hyperplane, and the edges are as presented in Table~\ref{coxeteredges}. We will label the vertices of a diagram by the vectors which define the hyperplanes (or their numbers).
\begin{table}[H]
\caption{\label{coxeteredges} The edges of a Coxeter diagram}
\centering
\begin{tabular}{l|l}
  Type of edge & Corresponds to\\
  \hline
  comprised of $m-2$ lines, or labelled $m$ & a dihedral angle $\frac{\pi}{m}$\\
  a single heavy line & a ``cusp'', or a dihedral angle zero\\
  a dashed line & two divergent faces\\
  no line & a dihedral angle $\frac{\pi}{2}$
\end{tabular}
\end{table}

Gram matrices (and, by association, Coxeter diagrams) have a well known classification according to their determinant and definiteness (cf. \cite{vinberg93}). An important part of the classification here is the parabolic and elliptic subdiagrams. These correspond to ideal vertices and regular vertices of the fundamental domain respectively. The \emph{rank} of a Coxeter diagram is equal to the rank of its Gram matrix. For elliptic diagrams, the rank is equal to the number of vertices, while in the parabolic case the rank is given by the number of vertices minus the number of connected components.

A ``cusp'' as presented in Table~\ref{coxeteredges} is comprised of only two hyperplanes, and shall be called a \emph{cusp pair}. In dimension $3$ two such cusp pairs, with the constituent hyperplanes mutually orthogonal, form a parabolic subdiagram of rank $2$, or in other words a properly formed cusp of the fundamental domain.

In order to determine whether a polyhedron of finite volume has been generated by the algorithm, we use the following propositions. These statements were proved for the Gram matrix of a Coxeter polyhedron of finite volume by Vinberg \cite{vinberg85}, who noted that the results may be reinterpreted in terms of the Coxeter diagram. The form given here is due to Bugaenko \cite{bugaenko92}.

\begin{proposition}\label{bugaenko1} %[\cite{bugaenko92}, Proposition~1.1]
A Coxeter polyhedron is bounded if and only if any elliptic subdiagram of rank $n-1$ of its Coxeter diagram can be extended to an elliptic subdiagram of rank $n$ in precisely two ways.
\end{proposition}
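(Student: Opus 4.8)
The plan is to reduce the boundedness criterion to the companion finite-volume criterion, together with Vinberg's dictionary between the combinatorics of the Coxeter diagram and the face structure of $P$; throughout I take $P$ to have finite volume, which is the standing hypothesis of Vinberg's theorem and is automatic for the cells produced by the algorithm. Recall the dictionary: an elliptic subdiagram of rank $k$ cuts out an ordinary (finite) face of $P$ of codimension $k$, so that rank-$n$ elliptic subdiagrams correspond to ordinary vertices and rank-$(n-1)$ elliptic subdiagrams to edges, while a parabolic subdiagram of rank $n-1$ corresponds to an ideal vertex of $P$ on $\partial\Hy^n$. Since a finite-volume polyhedron is compact precisely when it has no ideal vertices, the proposition is equivalent to the assertion that $P$ contains no parabolic subdiagram of rank $n-1$ if and only if every elliptic subdiagram of rank $n-1$ has exactly two elliptic extensions of rank $n$.

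I would then invoke the finite-volume criterion (the neighbouring statement of Vinberg, in Bugaenko's formulation): every elliptic subdiagram $\Sigma$ of rank $n-1$ is contained in exactly two subdiagrams that are either elliptic of rank $n$ or parabolic of rank $n-1$. Geometrically these two subdiagrams record the two endpoints of the edge corresponding to $\Sigma$ — an ordinary endpoint contributes a rank-$n$ elliptic extension (adjoin one facet), an ideal endpoint a rank-$(n-1)$ parabolic extension. The proposition now becomes purely combinatorial, and the one lemma needed is that the diagram contains a parabolic subdiagram of rank $n-1$ if and only if some rank-$(n-1)$ elliptic subdiagram admits a parabolic extension. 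The nontrivial direction is handled by removing one vertex from each connected affine component of a given rank-$(n-1)$ parabolic subdiagram $\Theta$ with $c$ components and $V$ nodes in total: a connected affine diagram has corank one, so deleting one node from each component leaves a finite-type, hence elliptic, subdiagram $\Sigma'$ on $V-c$ nodes, of rank $V-c = n-1$, and $\Theta$ is then a rank-$(n-1)$ parabolic extension of $\Sigma'$; the converse is immediate, since any parabolic extension of a rank-$(n-1)$ elliptic subdiagram is itself a rank-$(n-1)$ parabolic subdiagram.

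Putting the pieces together gives both implications at once: under finite volume each rank-$(n-1)$ elliptic subdiagram has exactly two extensions drawn from \{elliptic of rank $n$, parabolic of rank $n-1$\}, and $P$ is bounded $\iff$ there is no parabolic subdiagram of rank $n-1$ $\iff$ (by the lemma) no rank-$(n-1)$ elliptic subdiagram has a parabolic extension $\iff$ both extensions of every rank-$(n-1)$ elliptic subdiagram are elliptic of rank $n$, i.e. there are exactly two such. The main obstacle is not this bookkeeping but the geometric input it rests on: one must know that every edge of the acute-angled polyhedron ends in exactly two vertices of $\overline{\Hy^n}$, with no edge running off to a boundary point that is not a genuine (ordinary or ideal) vertex. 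This is precisely where finite volume is essential and is the substantive content of Vinberg's theorem, which I would cite rather than reprove; a secondary point demanding care is the rank bookkeeping in the deletion lemma when $\Theta$ has several affine components, where one must delete a node from \emph{every} component in order to land on an elliptic diagram of the correct rank.
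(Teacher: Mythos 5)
The paper does not actually prove this proposition: it is quoted from Vinberg \cite{vinberg85} (via Bugaenko \cite{bugaenko92}), with only the one-sentence geometric gloss that each edge of the polyhedron has exactly two ends, ordinary or ideal. Measured against that, your proposal does something genuinely more: it articulates a clean reduction of the boundedness criterion to the finite-volume criterion (Proposition~\ref{bugaenko2}) plus one combinatorial lemma, and that lemma is correct, including the point you rightly flag --- in a parabolic diagram $\Theta$ of rank $n-1$ with $c$ affine components one must delete one node from \emph{each} component, leaving an elliptic diagram on $(n-1+c)-c = n-1$ nodes of which $\Theta$ is a parabolic extension. Given Proposition~\ref{bugaenko2} and Vinberg's dictionary between subdiagrams and (ordinary or ideal) faces, your bookkeeping correctly shows: exactly two extensions in the combined set, both elliptic $\iff$ no parabolic subdiagram of rank $n-1$ $\iff$ no ideal vertices.

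The genuine weak point is your standing hypothesis that $P$ has finite volume, and especially the justification that this "is automatic for the cells produced by the algorithm." It is not: the intermediate polyhedra produced by Vinberg's algorithm typically have \emph{infinite} volume, and Propositions~\ref{bugaenko1} and~\ref{bugaenko2} are used in this paper precisely as the termination test, i.e.\ to \emph{decide} finite volume from the diagram (see the verification for $\widehat{Bi}(33)$); one cannot presuppose what the criterion is meant to detect. For the "only if" direction of Proposition~\ref{bugaenko1} the hypothesis is harmless (bounded implies finite volume), but in the "if" direction your argument only proves the weaker statement "among finite-volume Coxeter polyhedra, bounded $\iff$ condition." To get the statement as printed you must rule out, for a general Coxeter polyhedron, that some elliptic $\Sigma$ of rank $n-1$ has two elliptic rank-$n$ extensions \emph{and} a parabolic extension --- combinatorially nothing forbids this (e.g.\ $\Sigma = A_1\times A_1$ inside an $\tilde{A}_1\times\tilde{A}_1$ in dimension $3$); what forbids it is the geometric fact that the edge corresponding to $\Sigma$ has at most two ends, which is exactly the content of Vinberg's theorem for arbitrary acute-angled polyhedra, not a consequence of the finite-volume case. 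Relatedly, some nondegeneracy is implicit in the statement: for a single hyperplane (a half-space) in $\Hy^n$, $n\ge 3$, there are no elliptic subdiagrams of rank $n-1$ at all, the condition holds vacuously, and $P$ is unbounded; Vinberg's original formulation avoids this by requiring the diagram to contain an elliptic subdiagram of rank $n$ (a vertex), which your finite-volume assumption restores only indirectly. So either state the proposition with that existence hypothesis and run your reduction through the general (not finite-volume) dictionary, or keep the finite-volume hypothesis but say explicitly that you are proving a restricted version --- and drop the claim that the restriction is automatic in applications.
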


\begin{proposition}\label{bugaenko2} %[\cite{bugaenko92}, Proposition~1.2]
A Coxeter polyhedron is of finite volume if and only if any elliptic subdiagram of rank $n-1$ of its Coxeter diagram can be extended to an elliptic subdiagram of rank $n$ or a parabolic subdiagram of rank $n-1$ in precisely two ways.
\end{proposition}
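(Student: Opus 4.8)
The plan is to reduce the volume-finiteness of the Coxeter polyhedron $P = \bigcap_i \Pi_i^-$ to a purely local combinatorial condition on the edges of $P$, by invoking the standard dictionary between the faces of an acute-angled polyhedron and the subdiagrams of its Coxeter diagram. First I would set up this dictionary. For $P$ viewed as the fundamental chamber of the reflection group it generates, a subset of the facets $\Pi_i$ bounds a genuine face of $P$ exactly when the reflections in those facets generate a finite group, that is, when the corresponding subdiagram is \emph{elliptic}; and in that case the codimension of the face equals the rank of the subdiagram. In particular, ordinary (finite) vertices correspond to elliptic subdiagrams of rank $n$, and edges (one-dimensional faces) correspond to elliptic subdiagrams of rank $n-1$. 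The remaining boundary strata are the ideal vertices at $\partial\Hy^n$: the hyperplanes through such a vertex cut out a Euclidean Coxeter polytope on a small horosphere, hence form a \emph{parabolic} subdiagram, and the Euclidean dimension $n-1$ of the horosphere forces this subdiagram to have rank exactly $n-1$ (an affine component of Euclidean dimension $d$ contributes corank one, so a product of simplices of total dimension $n-1$ still has rank $n-1$). This identification --- elliptic subdiagrams $\leftrightarrow$ finite faces, parabolic subdiagrams of rank $n-1$ $\leftrightarrow$ cusps --- is the substantive input and the place where most of the work sits.

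Next I would translate volume-finiteness into the language of edges. A convex polyhedron in $\Hy^n$ has finite volume if and only if it is the convex hull of finitely many points of $\overline{\Hy^n}$, i.e.\ of its ordinary and ideal vertices; equivalently, every point in which $\overline{P}$ meets the boundary sphere $\partial\Hy^n$ is an isolated ideal vertex. Since each edge of $P$ is a geodesic segment, ray, or complete line, this is the same as requiring that each of the two ends of every edge terminate either in an ordinary vertex or in a cusp. Indeed, a ray or line whose ideal endpoint were not a cusp would contribute a positive-dimensional piece to $\overline{P}\cap\partial\Hy^n$ and hence infinite volume, whereas if all ends are ordinary or ideal vertices then $P$ is visibly the convex hull of finitely many points of $\overline{\Hy^n}$.

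Finally I would combine the two steps. Under the dictionary an edge is a rank-$(n-1)$ elliptic subdiagram $S$; an end of this edge at an ordinary vertex is precisely an extension of $S$ to a rank-$n$ elliptic subdiagram (one more facet meets $P$ there), while an end at a cusp is precisely an extension of $S$ to a rank-$(n-1)$ parabolic subdiagram (the parabolic diagram of that cusp contains $S$). Each edge has exactly two ends, and these extensions are in bijection with the ends, since any facet enlarging $S$ to an admissible subdiagram meets the closure of the edge in an endpoint and conversely. Therefore $P$ has finite volume if and only if every rank-$(n-1)$ elliptic subdiagram extends, in precisely two ways, to a rank-$n$ elliptic or a rank-$(n-1)$ parabolic subdiagram, which is the assertion of Proposition~\ref{bugaenko2}; forbidding the parabolic (cusp) extensions recovers the bounded case of Proposition~\ref{bugaenko1}. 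The main obstacle I anticipate is exactly the rank bookkeeping at the ideal vertices: one must verify that the horospherical cross-section at a cusp is a compact Euclidean Coxeter polytope with parabolic diagram of rank \emph{exactly} $n-1$, and that admissible extensions of $S$ correspond bijectively to the ends of the edge with no spurious or missing cases. This demands a careful local analysis of $P$ near its intersection with the boundary sphere, and it is where the acute-angledness of $P$ is used in an essential way.
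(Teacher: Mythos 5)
The paper offers no proof of this proposition: it is quoted from Vinberg \cite{vinberg85} (who proved it in Gram-matrix form) and Bugaenko \cite{bugaenko92}, accompanied only by the one-line geometric gloss that each edge of the polyhedron has two vertices, either of which may lie at the ideal boundary. Your sketch is a correct reconstruction of exactly that standard argument --- the dictionary elliptic/parabolic subdiagrams $\leftrightarrow$ finite faces/cusps of an acute-angled polyhedron, plus finite volume $\Leftrightarrow$ every edge terminates in two ordinary or ideal vertices --- so it follows essentially the same route the paper relies on, with the genuinely technical inputs (Vinberg's combinatorial theory of acute-angled polyhedra, the implicit assumption of finitely many facets, and the endpoint--extension bijection at cusps) correctly flagged rather than hidden.
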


Geometrically these statements mean that each edge of the polyhedron has two vertices, either one or both of which may be at the ideal boundary of the hyperbolic space.

\section{Reflection subgroups of Bianchi groups}\label{reflbackground}
From the previous section we know the algorithm for finding reflective isometries in Bianchi groups $Bi(m)$ and $\widehat{Bi}(m)$. We will now discuss the structural information about these groups which is provided by the existence of these reflections. A part of the discussion applies in much wider generality, so in the beginning we will consider discrete groups of isometries of an $n$-dimensional hyperbolic space $\Hy^n$ and later restrict to the case of $n=3$.

Let $\Gamma$ be a discrete subgroup of $\mathrm{Isom}(\Hy^n)$, and let $\Gamma_r$ be its subgroup generated by all the reflections from $\Gamma$. Since a conjugation of a reflection in $\mathrm{Isom}(\Hy^n)$ is again a reflection, the subgroup $\Gamma_r$ is normal in $\Gamma$ and we have
\begin{equation}\label{Gamma_rtimesH}
\Gamma = \Gamma_r \rtimes H.
\end{equation}
The group $\Gamma_r$ has a fundamental domain which is a polyhedron $P$ (which may have infinitely many faces, and may also have infinite volume) in $\Hy^n$, whose facets are precisely the mirror hyperplanes of the hyperbolic reflections which generate $\Gamma_r$. From now on by \emph{fundamental polyhedron} of $\Gamma_r$ we will always mean this Coxeter polyhedron. The group $H$ in decomposition (\ref{Gamma_rtimesH}) can be identified with the symmetry group of $P$. This fact was proved in \cite{vinberg67} for the case when the group $H$ is finite, but the same argument works in general (see also \cite[Lemma 5.2]{agolbelolipetskystormwhyte08} where Vinberg's proof is repeated).

\begin{definition}
A subgroup $\Gamma \subset \mathrm{Isom}(\Hy^n)$ is called a \emph{lattice} if it is a discrete subgroup of finite covolume.

A lattice $\Gamma$ is called \emph{reflective} if its non-reflective part $H$ in (\ref{Gamma_rtimesH}) is finite, and \emph{quasi-reflective} if $H$ is infinite, has an infinitely distant fixed point $q \in \partial \Hy^n$, and leaves invariant a horosphere $S=\mathbb{S}^{n-1}$ of the maximal dimension with the centre at $q$.
\end{definition}

From the definition it follows that quasi-reflective lattices are necessarily non-cocompact (which is clearly not the case for the reflective ones). The group $H$ acts by affine isometries of $S$ and is itself a discrete subgroup of $\text{Aff}(S)$. It has a finite index subgroup $H_t$ generated by translations of $S$ (cf. \cite[Section 4.2]{humphreys90}). The rank $r$ of $H_t$ is equal to the number of the linearly independent translations. We will call it the \emph{quasi-reflective rank} of $\Gamma$ and we will also say that $\Gamma$ is a quasi-reflective group of rank $r$. 

When $H$ is considered as the symmetry group of $P$, and hence a subgroup of $\mathrm{Isom}(\Hy^n)$, the elements from $H_t$ correspond to the parabolic transformations. Let $\Gamma_1 \le \Gamma$ be a subgroup defined by $\Gamma_1 = \Gamma_r \rtimes H_t$.  It is easy to see that $\Gamma_1$ is a normal subgroup of $\Gamma$. We denote the corresponding quotient group by $Q$. The whole configuration is represented by the following diagram:
\begin{figure}[H]
  \centering
  \begin{tikzpicture}
    \coordinate (one) at (0, 0);
    \coordinate (onelabel) at (0, -1);
    \coordinate (one2) at (0.2, 0);
    \coordinate (one3) at (0, -0.2);
    \coordinate (two) at (0, -2);
    \coordinate (twolabel) at (0.5, -1.5);
    \coordinate (two2) at (0, -1.8);
    \coordinate (two3) at (0.2, -1.8);
    \coordinate (three) at (1, -1);
    \coordinate (threelabel) at (0.5, -0.5);
    \coordinate (three2) at (0.8, -0.8);
    \coordinate (three3) at (0.8, -1.2);

    \draw[thick] (one3) -- (two2);
    \draw[thick] (one2) -- (three2);
    \draw[thick] (two3) -- (three3);

    \node at (one) {$\Gamma$};
    \node[left] at (onelabel) {$H$};
    \node at (two) {$\Gamma_r$};
    \node[below right] at (twolabel) {$H_t$};
    \node at (three) {$\Gamma_1$};
    \node[above right] at (threelabel) {$Q$};
  \end{tikzpicture}
\end{figure}

The fundamental polyhedron $P$ of the reflection subgroup of a quasi-reflective group $\Gamma$ is an infinite volume polyhedron in $\Hy^n$ with infinitely many facets. Its symmetry group $H$ is isomorphic to an affine crystallographic group of the translation rank $r \le n-1$ and $P / H$ has finite volume. Following Ruzmanov \cite{ruzmanov90} we will call such polyhedra \emph{quasi-bounded}. A quasi-bounded polyhedron $P$ has an infinitely distant point $q$ such that the intersection of some horosphere with the centre at $q$ and $P$ is unbounded. This point $q$ is unique and it is called the \emph{singular point} of $P$.

The relationship between reflective and quasi-reflective lattices is demonstrated by the following lemma.

\begin{lemma}\label{reflorq}
If $\Gamma \le \Gamma^\prime$ are two lattices in $\mathrm{Isom}(\Hy^n)$ and $\Gamma$ is quasi-reflective, then $\Gamma^\prime$ is either reflective or quasi-reflective.
\end{lemma}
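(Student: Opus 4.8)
The plan is to compare the reflection data of the two lattices and to reduce the reflective/quasi-reflective dichotomy for $\Gamma'$ to a volume computation for the fundamental polyhedron of its reflection subgroup. First I would record the basic structural input. Since $\Gamma$ and $\Gamma'$ are both lattices and $\Gamma \le \Gamma'$, the index $[\Gamma':\Gamma]$ is finite (it is the ratio of the two covolumes). Writing $\Gamma = \Gamma_r \rtimes H$ and $\Gamma' = \Gamma'_r \rtimes H'$ as in (\ref{Gamma_rtimesH}), every reflection of $\Gamma$ is a reflection of $\Gamma'$, so $\Gamma_r \le \Gamma'_r$. Hence the mirror set of $\Gamma_r$ is contained in that of $\Gamma'_r$, and I may choose the fundamental Coxeter polyhedra so that $P' \subseteq P$, where $P$ is the quasi-bounded fundamental polyhedron of $\Gamma_r$ with singular point $q$, and $P'$ is a chamber of $\Gamma'_r$.

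Next I would split into two cases according to the volume of $P'$. Because $\operatorname{vol}(P'/H') = \operatorname{covol}(\Gamma') < \infty$, we have $\operatorname{vol}(P') < \infty$ precisely when $H'$ is finite. If $\operatorname{vol}(P') < \infty$, then $P'$ is a finite-volume Coxeter polyhedron and so has finitely many facets; its symmetry group $H' = \operatorname{Sym}(P')$ embeds into the finite permutation group of these facets and is therefore finite, so $\Gamma'$ is reflective. If $\operatorname{vol}(P') = \infty$, then $H'$ is infinite, and it only remains to verify the geometric conditions in the definition of quasi-reflectivity.

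The heart of the argument, and the step I expect to be the main obstacle, is this infinite-volume case: I must show that $H'$ fixes a point at infinity and preserves a horosphere of maximal dimension there. Here I would exploit the containment $P' \subseteq P$ together with the fact (recalled above) that $P$ has a \emph{unique} singular point $q$. For any boundary point $p \neq q$, a horosphere centred at $p$ meets $P$, and hence meets $P' \subseteq P$, in a bounded set; since $\operatorname{vol}(P') = \infty$ while $P'/H'$ has finite volume, $P'$ must be unbounded along some horosphere, and by the previous sentence that horosphere is necessarily centred at $q$. Thus $q$ is intrinsically attached to $P'$ and is fixed by every symmetry of $P'$, that is, $H'$ fixes $q$. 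The delicate point, and the reason the quasi-boundedness of the ambient $P$ is essential, is exactly the claim that the infinite volume of $P'$ forces such an unbounded horospherical cross-section at $q$ (rather than some other form of non-compactness that would not be detected by the singular point of $P$).

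Finally, knowing that $H'$ fixes $q$, I would conclude as follows. Since $\Gamma$ is quasi-reflective, $q$ is a cusp of $\Gamma$, and as $[\Gamma':\Gamma] < \infty$ it is also a cusp of the lattice $\Gamma'$. Therefore $\operatorname{Stab}_{\Gamma'}(q)$, and in particular the subgroup $H'$, acts on the horospheres centred at $q$ by Euclidean isometries preserving each horosphere, a cusp stabiliser of a finite-covolume lattice containing no dilations. As $H'$ is infinite and discrete with $P'/H'$ of finite volume, this action is that of an affine crystallographic group of positive translation rank $r \le n-1$ on the full horosphere $S = \mathbb{S}^{n-1}$ centred at $q$. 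By the definition of quasi-reflectivity this shows that $\Gamma'$ is quasi-reflective, which together with the reflective case above exhausts the possibilities and completes the proof.
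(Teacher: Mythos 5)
Your overall skeleton is the same as the paper's: note $\Gamma_r \le \Gamma'_r$, split on whether the reflection subgroup $\Gamma'_r$ has finite covolume (reflective case), and otherwise show that the singular point $q$ of the quasi-bounded polyhedron $P$ is also the unique singular point of the chamber $P' \subseteq P$, whence $P'$ is quasi-bounded and $\Gamma'$ is quasi-reflective. The uniqueness half of your argument (a horosphere centred at $p \neq q$ meets $P$, hence $P' \subseteq P$, boundedly) is exactly the paper's. But at the crux you rely on the claim that ``since $\operatorname{vol}(P') = \infty$ while $P'/H'$ has finite volume, $P'$ must be unbounded along some horosphere,'' and this implication is false as a general principle: take $P'$ to be the (convex) $R$-neighbourhood of a geodesic, invariant under the cyclic group $H'$ generated by a loxodromic translation along that geodesic. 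Then $\operatorname{vol}(P') = \infty$, $P'/H'$ has finite volume, yet every horosphere --- in particular those centred at the two endpoints of the axis --- meets $P'$ in a bounded set. This is not a hypothetical worry in the present context: the whole non-quasi-reflectivity machinery of Section~\ref{(Q)reflbianchi} (e.g.\ $\widehat{Bi}(35)$) consists of infinite-volume reflection chambers whose infinite symmetry groups are loxodromic, i.e.\ precisely the ``other form of non-compactness'' you mention. You correctly flag this as the delicate point and correctly sense that quasi-boundedness of the ambient $P$ must be what excludes it, but you never actually deploy that hypothesis: your argument only uses $P' \subseteq P$ to bound sections at $p \neq q$, which by itself does not rule out the loxodromic scenario. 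So the pivotal step is asserted, not proved.

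The paper closes exactly this hole by running the implication in the opposite direction. Instead of first producing an unbounded horospherical section of $P'$ abstractly, it uses the tiling $P = \bigcup_{\gamma \in \Gamma'_r/\Gamma_r} \gamma(P')$ of the quasi-bounded polyhedron by $\Gamma'_r$-copies of $P'$: the horosphere centred at $q$ has unbounded intersection with $P$, and since $P'$ has infinite volume this unboundedness is inherited by $P'$ itself, so the singular behaviour at $q$ is transferred \emph{from} $P$ \emph{to} $P'$ rather than deduced from volume considerations internal to $P'$. (The converse containment $P' \subseteq P$ then gives, as in your write-up, that no other point can be singular for $P'$, so $q$ is its unique singular point.) To repair your proposal you would need to replace your volume-plus-quotient assertion by this tiling argument, or by some other argument that genuinely uses the existence of the unbounded horospherical section of $P$ at $q$; your concluding paragraph (once $H'$ is known to fix the parabolic point $q$, its elements preserve horospheres and form an affine crystallographic group of rank $r \le n-1$) is fine and matches the paper's intent.
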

\begin{proof}
Let $\Gamma = \Gamma_r \rtimes H$ and $P$ is a quasi-bounded fundamental polyhedron of $\Gamma_r$. Let $q$ denote the singular point of $P$. Let $\Gamma_r^\prime$ be the maximal reflective subgroup of $\Gamma^\prime$.

As $\Gamma \le \Gamma^\prime$, we have $\Gamma_r \le \Gamma_r^\prime$.

If $\Gamma_r^\prime$ has finite covolume, then $\Gamma^\prime / \Gamma_r^\prime$ is finite and $\Gamma^\prime$ is reflective. Assume that $covol(\Gamma_r^\prime) = \infty$ and let $P^\prime$ denote the quotient polyhedron $\Hy^n / \Gamma_r^\prime$. We have $P = \bigcup_{\gamma \in \Gamma_r^\prime / \Gamma_r}\gamma(P^\prime)$ a quasi-bounded polyhedron and $P^\prime$ has infinite volume, hence the horosphere with centre $q$, which has unbounded intersection with $P$, has also unbounded intersection with $P^\prime$. On the other hand, if a horosphere has unbounded intersection with $P^\prime$ then it obviously has also unbounded intersection with $P$. It follows that $q$ is a unique singular point of $P^\prime$ and $P^\prime$ is a quasi-bounded polyhedron. Hence $\Gamma^\prime$ is a quasi-reflective group.
\end{proof}

From now on let us restrict to the case $n= 3$. In \cite{vinberg90}, Vinberg showed that reflectivity of $Bi(m)$ or $\widehat{Bi}(m)$ imposes a strong restriction on the class number of the field $K_m = \mathbb{Q}[\sqrt{-m}]$. We will recall the main steps of his argument and extend it to the quasi-reflective groups.

The group $Bi(m)$ acts on the space $H_2$ preserving the lattice $L_m$, which consists of the matrices with entries in $\Am$ (see Section~\ref{background}). A point at infinity of the space $\Hy^3$ is called \emph{rational} if the corresponding one-dimensional subspace of $H_2$ contains a non-zero element of the lattice $L_m$. The rationality of a point $p\in \partial \Hy^3$ is equivalent to the fact that $p\in \gamma \bar{D}$ for some $\gamma \in \Gamma$, where $\Gamma$ is a subgroup of $\mathrm{Isom}(\Hy^3)$ commensurable with $Bi(m)$ and $D$ is a convex fundamental domain of $\Gamma$.

From the other side, there is an isomorphism $\partial\Hy^3 \simeq P\mathbb{C}^2$ which maps the set of the rational points to the set $PK_m^2 \subset P\mathbb{C}^2$. Using the theory of modules over Dedekind rings, this leads to a natural one-to-one correspondence between the set $PK_m^2 / \mathrm{PGL}_2(\Am)$ and the group of ideal classes $C(\Am)$. Hence the number of cusps of $\Hy^3 / Bi(m)$ is equal to the order of $C(\Am)$. This result was first observed by Bianchi and was proved in greater generality by Hurwitz in a letter to Bianchi \cite[p.~103--105]{bianchi1892}
%(see also \cite[p.~315]{EGM_book}).

We now look at the action of the group $\widehat{Bi}(m)$ on the set of the rational points at infinity. It induces the action of the quotient group $\widehat{Bi}(m) / \mathrm{PGL}_2(\Am) = C_2(\Am) \rtimes \langle\tau\rangle$, where $C_2(\Am)$ is the 2-periodic part of $C(\Am)$, on the quotient set $PK_m^2 / \mathrm{PGL}_2(\Am)$, and hence on the group $C(\Am)$.

Following \cite[Section 3]{vinberg90}, this action $\circ$ is defined by
\begin{align*}
\tau \circ [\mathfrak{b}] & = [\mathfrak{b}]^{-1}; \\
[\mathfrak{a}] \circ [\mathfrak{b}] & = [\mathfrak{a}][\mathfrak{b}],
\end{align*}
where $[\mathfrak{b}] \in C(\Am)$, $[\mathfrak{a}] \in C_2(\Am)$.

If a discrete group $\Gamma$ commensurable with $Bi(m)$ is reflective, then the stabiliser of any rational point at infinity in the group $\Gamma$ contains a reflection. If it is quasi-reflective then the stabiliser of any rational point at infinity not $\mathrm{PGL}_2(\Am)$-equivalent to the singular point contains a reflection. Hence the stabiliser of any element of the group $C(\Am)$ (respectively, of $C(\Am) \backslash [\mathfrak{c}], \tau\circ [\mathfrak{c}] = [\mathfrak{c}]^{-1}$, for some $[\mathfrak{c}] \in C(\Am)$) under the action described above contains an element of the group $C_2(\Am) \rtimes \langle\tau\rangle$ not belonging to $C_2(\Am)$ and arising from some element of $\Gamma$. Note that, if in the quasi-reflective case the singular point has no reflections in its stabiliser, then $[\mathfrak{c}]$ can be not an involution in the group $C(\Am)$. This case corresponds to the groups of the quasi-reflective rank $2$. All the other cases are identical to the ones considered in \cite{vinberg90}.

We have:

\begin{enumerate}
\item \label{qrrank1ar} If the group $Bi(m)$ is reflective or quasi-reflective of rank $1$, then the element $\tau$ must act trivially on $C(\Am)$. Hence the orders of all elements of $C(\Am)$ divide 2.
\item \label{qrrank2ar} If the group $\widehat{Bi}(m)$ is reflective or quasi-reflective of rank $1$, then the stabiliser of any $[\mathfrak{b}] \in C(\Am)$ must contain an element of the form $[\mathfrak{a}]\tau$, where $[\mathfrak{a}] \in C_2(\Am)$, and we have
\begin{align*}
[\mathfrak{a}] \tau \circ [\mathfrak{b}] & = [\mathfrak{a}][\mathfrak{b}]^{-1} = [\mathfrak{b}]\\
[\mathfrak{b}]^2 & = [\mathfrak{a}]\\
[\mathfrak{b}]^4 & = 1
\end{align*}
\item \label{qrrank3ar} If the group $Bi(m)$ is quasi-reflective of rank $2$, then the argument of case (\ref{qrrank1ar}) applies to all elements of the group $C(\Am)$ except possibly for some $[\mathfrak{c}]$ and $[\mathfrak{c}]^{-1}$, which can be not involutive. The only possibilities to achieve the latter is that $[\mathfrak{c}]$ has order $3$ or $4$ in $C(\Am)$.
\item If the group $\widehat{Bi}(m)$ is quasi-reflective of rank $2$, then the description of $C(\Am)$ is obtained as a combination of cases (\ref{qrrank2ar}) and (\ref{qrrank3ar}).
\end{enumerate}

All together these lead to the following statement.

\begin{proposition}\label{classfilter}
The class groups of the fields $K_m$ satisfy:
\begin{enumerate}
\item If $Bi(m)$ is reflective or quasi-reflective of rank $1$ then $C(\Am) \cong (\mathbb{Z} / 2\mathbb{Z})^n, n \in \mathbb{Z}_{\geq 0}$;
\item If $\widehat{Bi}(m)$ is reflective or quasi-reflective of rank $1$ then $C(\Am) \cong (\mathbb{Z} / 2\mathbb{Z})^n \times (\mathbb{Z} / 4\mathbb{Z})^l, n,l \in \mathbb{Z}_{\geq 0}$;
\item If $Bi(m)$ is quasi-reflective of rank $2$ then $C(\Am) \cong (\mathbb{Z} / 2\mathbb{Z})^n \times (\mathbb{Z} / q \mathbb{Z})^k, n \in \mathbb{Z}_{\geq 0}, k = 0 \text{ or } 1 \text{ and } q = 3 \text{ or } 4$;
\item If $\widehat{Bi}(m)$ is quasi-reflective of rank $2$ then $C(\Am) \cong (\mathbb{Z} / 2\mathbb{Z})^n \times (\mathbb{Z} / 3\mathbb{Z})^k \times (\mathbb{Z} / 4\mathbb{Z})^l, n,l \in \mathbb{Z}_{\geq 0}, k = 0 \text{ or } 1$.
\end{enumerate}
\end{proposition}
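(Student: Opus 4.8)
The plan is to obtain each of the four structural descriptions by feeding the order conditions on ideal classes, already extracted from the reflection/stabiliser analysis in items (\ref{qrrank1ar})--(4) above, into the classification of finite abelian groups. Since $Bi(m)$ and $\widehat{Bi}(m)$ have finite covolume, the number of cusps is finite and hence $C(\Am)$ is a finite abelian group; by the structure theorem it is a product of cyclic groups of prime-power order, so the whole problem reduces to deciding which prime powers are admissible.

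The two reflective/rank-$1$ cases are immediate because the order condition is uniform over the whole group. In case (\ref{qrrank1ar}) every class satisfies $[\mathfrak{b}]^2 = 1$, so $C(\Am)$ has exponent dividing $2$ and every cyclic factor is $\mathbb{Z}/2\mathbb{Z}$, giving $(\mathbb{Z}/2\mathbb{Z})^n$. In case (\ref{qrrank2ar}) every class satisfies $[\mathfrak{b}]^4 = 1$, so $C(\Am)$ has exponent dividing $4$; this excludes all odd torsion and forces each cyclic factor to be $\mathbb{Z}/2\mathbb{Z}$ or $\mathbb{Z}/4\mathbb{Z}$, giving $(\mathbb{Z}/2\mathbb{Z})^n \times (\mathbb{Z}/4\mathbb{Z})^l$. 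Both assertions are then formal consequences of the structure theorem.

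The rank-$2$ cases (\ref{qrrank3ar}) and (4) require more care, since now the order condition holds for every class except the exceptional pair $[\mathfrak{c}], [\mathfrak{c}]^{-1}$ arising from the singular cusp, while $[\mathfrak{c}]$ itself may have order $3$ or $4$. I would separate $C(\Am)$ into its odd and its $2$-primary parts and show that the exceptional behaviour is confined to a single cyclic factor. The decisive point is multiplicative: if $[\mathfrak{c}]$ has order $3$ and $t$ is any nontrivial class of order dividing $2$, then $[\mathfrak{c}]\,t$ has order $6$, which is neither an involution nor one of $[\mathfrak{c}]^{\pm 1}$ and whose order does not divide $4$; this contradicts the defining condition in (\ref{qrrank3ar}) (respectively (4)), and the same applies to a product of $[\mathfrak{c}]$ with a class of order $4$. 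Hence an order-$3$ exceptional class cannot coexist with any further torsion, so in that situation $C(\Am) \cong \mathbb{Z}/3\mathbb{Z}$, which is precisely the $q = 3$, $k = 1$ entry. If instead $[\mathfrak{c}]$ has order $4$, it is absorbed into one of the $\mathbb{Z}/4\mathbb{Z}$ factors permitted by the exponent bound inherited from case (\ref{qrrank2ar}), and counting the classes of order $4$ confirms that no additional $4$-torsion is created. Assembling these observations yields the normal forms of (\ref{qrrank3ar}) and (4) with $k \le 1$.

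The main obstacle is exactly this bookkeeping in (\ref{qrrank3ar}) and (4): one must verify that no product of the exceptional class with other torsion escapes the permitted set of non-involutions, and that the count of elements of order $3$ (respectively order $4$) is small enough to be recorded by a single extra cyclic factor. Once this is carried out the decompositions follow formally from the structure theorem, and no further geometric input beyond the cusp/ideal-class dictionary already recorded is needed.
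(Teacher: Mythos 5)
Your proposal is correct and follows the paper's route: the paper gives no separate proof of Proposition~\ref{classfilter} beyond the enumerated stabiliser analysis (1)--(4) that precedes it (``All together these lead to the following statement''), so the actual content of a proof is exactly the passage from those order conditions to the normal forms, which you carry out via the structure theorem for finite abelian groups. Where you go beyond the paper is in the rank-$2$ bookkeeping: your multiplicative coexistence argument (if $[\mathfrak{c}]$ has order $3$ and $t\neq 1$ has order dividing $2$ or $4$, then $[\mathfrak{c}]t$ lies outside $\{[\mathfrak{c}]^{\pm 1}\}$ and violates the involution, respectively exponent-$4$, condition) is left implicit in the paper, and it in fact yields conclusions \emph{sharper} than the stated normal forms: an order-$3$ exceptional class forces $C(\Am)\cong\Z/3\Z$ exactly (so $n=0$, not merely $k\le 1$), and in case (3) an order-$4$ exceptional class forces $C(\Am)\cong\Z/4\Z$, since $(\Z/2\Z)^n\times\Z/4\Z$ with $n\ge 1$ contains at least four elements of order $4$ while only $[\mathfrak{c}]^{\pm 1}$ may be non-involutive. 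These sharper structures imply the proposition as stated --- its lists are supersets, harmless because the proposition is used only as a necessary-condition filter. One small slip to correct: in the order-$4$ subcase of (3) you appeal to ``the exponent bound inherited from case (2)'', but case (3) carries no exponent-$4$ bound --- the constraint there is that every class off $\{[\mathfrak{c}]^{\pm 1}\}$ is an involution; it is your subsequent count of order-$4$ classes that actually closes this subcase, so the argument stands once that phrase is removed.
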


\section{Finiteness of quasi-reflective Bianchi Groups}\label{QRbackground}
We begin proving Theorem~\ref{thm3} by establishing an effective finiteness result for the quasi-reflective Bianchi groups. Our method is similar to \cite{belolipetsky09}, which in turn is a development of \cite{agol06} and which provides corresponding finiteness result for Theorems~\ref{thm1} and \ref{thm2}. An important extra ingredient of the proof in quasi-reflective case is provided by Lemma~\ref{lem1} below.

Let $\Gamma$ be a Bianchi group $Bi(m)$ or an extended Bianchi group $\widehat{Bi}(m)$. By Lemma~\ref{reflorq}, if $Bi(m)$ is quasi-reflective then $\widehat{Bi}(m)$ is reflective or quasi-reflective, and there is also a possibility that $\widehat{Bi}(m)$ is quasi-reflective while $Bi(m)$ is not. The method of this part of the proof applies equally well to both $Bi(m)$ and $\widehat{Bi}(m)$, and so we will not distinguish between these groups here. Addressing the differences between them requires a finer analysis which we will do later on.

Assume that $\Gamma$ is quasi-reflective.

We have $\Gamma = \Gamma_r \rtimes H$, $P$ is the quasi-bounded fundamental polyhedron of $\Gamma_r$, $H$ is an affine crystallographic group of rank $\le 2$ which acts by (hyperbolic) isometries on $P$, and $\mathcal{O} = \Hy^n / \Gamma = P / H$ is a finite volume hyperbolic 3-orbifold.

We will need a notion of (piecewise) \emph{conformal volume} for Riemannian orbifolds. This is a direct generalisation of the Li-Yau conformal invariant \cite{liyau82} which was developed in \cite{agol06} and \cite{agolbelolipetskystormwhyte08}. We refer to \cite[Section 2]{agolbelolipetskystormwhyte08} for the definition and basic facts about the conformal volume.

Let $\mathcal{Q} = \Hy^n/H$, an infinite volume hyperbolic 3-orbifold. Then $\mathcal{O} = \mathcal{Q} / \Gamma_r^\prime$, where $\Gamma_r^\prime \le \Gamma_r$ is a group generated by reflections. Geometrically $\mathcal{O}$ is obtained by adding to $\mathcal{Q}$ the mirror sides which correspond to the generators of $\Gamma_r^\prime$. Therefore $\mathcal{O}$ embeds into $\mathcal{Q}$ as a codimension-$0$ suborbifold. By \cite[Section 2, Fact 4]{agolbelolipetskystormwhyte08}, the piecewise conformal volume
\begin{equation*}
V_{PC}(3, \mathcal{O}) \le V_{PC}(3, \mathcal{Q}).
\end{equation*}

In order to estimate the volume of $V_{PC}(\mathcal{Q})$ we will use the following result.

\begin{lemma}\label{lem1}
Let $H$ be an affine crystallographic group acting on a Euclidean plane. Then there exists a planar affine crystallographic group $H_r$, which is generated by reflections, such that $H_r \ge H$ and $[H_r : H] \le 4$.
\end{lemma}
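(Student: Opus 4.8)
The plan is to dissect $H$ into its translation lattice and point group and to build $H_r$ in two independent moves—enlarging the point group to a dihedral one, and enlarging the lattice to one carrying a reflection symmetry—arranging that each move costs a factor of at most $2$ in the index. Write $L = H \cap \mathbb{R}^2$ for the rank-$2$ lattice of translations in $H$ and $P = H/L \le \mathrm{O}(2)$ for the point group, and similarly $L_r, P_r$ for the group $H_r$ to be constructed. Since every translation of $H$ is a translation of $H_r$ we have $L \le L_r$, and $P = H/(H\cap L_r)$ embeds in $P_r = H_r/L_r$; a direct count then gives
\begin{equation*}
[H_r : H] = [P_r : P]\cdot [L_r : L].
\end{equation*}
So it suffices to find a planar reflection group $H_r \ge H$ with $[P_r:P]\cdot[L_r:L] \le 4$. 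I will use that a cocompact planar group generated by line reflections is one of $pmm$, $p3m1$, $p4m$, $p6m$, with translation lattices respectively rectangular, hexagonal, square and hexagonal; in particular $P_r$ is a dihedral group $D_2,D_3,D_4,D_6$ and $L_r$ must be rectangular, square or hexagonal.

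The case division is governed by the crystallographic restriction on $P$. \emph{If $P$ contains a rotation of order $3$, $4$ or $6$}, then $L$ is forced to be hexagonal or square, hence already reflection-symmetric: I take $L_r = L$ and let $P_r = \langle P, s\rangle$ be the dihedral hull obtained by adjoining one reflection preserving $L$, so that $[P_r:P]\le 2$ and $[L_r:L]=1$. \emph{If $P$ already contains a reflection} (so $P\cong D_1$ or $D_2$), then $L$ is rectangular or rhombic. In the rectangular subcase one enlarges $P$ to $D_2$ on $L_r=L$ inside $pmm$, at index $\le 2$. The rhombic subcase is handled by the following device, which is the workhorse of the argument: if $v,w$ is a basis of equal length then the rectangular sublattice $R = \langle v+w,\, v-w\rangle$ has index $2$ in $L$, so $L/R$ is $2$-torsion, whence $2L\subseteq R$ and $L \subseteq \tfrac12 R$; the lattice $L_r := \tfrac12 R$ is rectangular with $[L_r:L]=2$, and $pmm$ on $L_r$ then realises $[H_r:H]\le 4$.

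The delicate situation, and the one I expect to be \textbf{the main obstacle}, is $P\cong C_1$ or $C_2$ (the types $p1$ and $p2$), since here $L$ is a priori arbitrary and need not possess any reflection symmetry of its own. The point-group cost is $[D_2:P]$, equal to $2$ for $C_2$ and to $4$ for $C_1$, so the task reduces to producing a rectangular superlattice $L_r \supseteq L$ of index at most $2$ (respectively $1$). My approach is to run Gauss reduction on $L$ to a shortest basis, extract from the short vectors an orthogonal pair spanning a rectangular sublattice $R\le L$ of index $2$, and then invoke the halving device $L\subseteq \tfrac12 R$ exactly as above to obtain a rectangular $L_r$ with $[L_r:L]=2$; composing with the enlargement $P \to D_2$ yields $[H_r:H]\le 4$. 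The genuine content of the proof is concentrated precisely in securing such an index-$2$ rectangular sublattice via reduction theory—equivalently, a suitable orthogonal (or, after the halving, reflection-compatible) pair of short lattice vectors—and this is the step that demands the most care, since unlike the higher-symmetry cases it cannot be read off from the presence of a rotation or a reflection in $P$.

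Finally I would assemble the pieces: in every case the construction outputs one of $pmm,p3m1,p4m,p6m$, which is by definition generated by reflections and planar crystallographic, together with an index bound $[P_r:P]\cdot[L_r:L]\le 4$, establishing the lemma. I expect the verification of the individual cases (in particular checking that the dihedral hull and the chosen superlattice indeed produce a subgroup relation $H\le H_r$, not merely the asserted index) to be routine once the lattice-refinement step of the previous paragraph is in hand.
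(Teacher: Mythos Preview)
Your structural approach—factoring the index through the point group and the translation lattice—is quite different from the paper's, which simply runs through the $17$ wallpaper groups in Conway's orbifold notation and writes down, for each one, an explicit reflection group containing it with index at most $4$ (e.g.\ $632 \le \star 632$, $\circ \le \times\times \le \star\star$, etc.). The paper's proof is a bare enumeration with no structural content; yours promises a uniform explanation.

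But there is a genuine gap precisely where you flagged the difficulty. In the $P\cong C_1$ or $C_2$ case you propose to ``run Gauss reduction on $L$ to a shortest basis, extract from the short vectors an orthogonal pair spanning a rectangular sublattice $R\le L$ of index~$2$.'' Gauss reduction yields a basis $v,w$ with $|2\,v\!\cdot\! w|\le |v|^2\le |w|^2$; it does \emph{not} yield an orthogonal pair, and a generic lattice has no orthogonal pair of vectors whatsoever. Concretely, for $L=\mathbb{Z}+\mathbb{Z}\tau\subset\mathbb{C}$ with $\tau=x+iy$, orthogonality of $a+b\tau$ and $c+d\tau$ reads $ac+(ad+bc)x+bd(x^2+y^2)=0$; if $x$ and $x^2+y^2$ are algebraically independent over $\mathbb{Q}$ this has no nontrivial integer solutions, so $L$ has no rectangular sublattice of \emph{any} index and your halving device never starts. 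A secondary problem: your case ``$P$ contains a reflection, $L$ rectangular, take $L_r=L$'' fails for the non-symmorphic types $pg$ and $pgg$, where the reflection in $P$ lifts only to a \emph{glide} in $H$; the glide $s\cdot t_{v/2}$ does not lie in $pmm$ on the same lattice, so one must refine $L$ here too.

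You may notice that the first obstruction actually threatens the lemma itself for a generic $p1$: the paper's chain $\circ\to\times\times\to\star\star$ is a statement about abstract orbifold types, and realising it over a \emph{given} lattice requires that lattice to admit a reflection symmetry in some superlattice of bounded index. In the paper's application, however, the translation lattice of $H$ comes from parabolics in a Bianchi group and is (commensurable with) $O_m\subset\mathbb{C}$, which is always rectangular or rhombic—so the pathological case never arises there. If you want a correct general statement, the honest fix is to restrict to lattices that sit inside a rectangular or rhombic one with bounded index, or simply to follow the paper and argue type by type for the finitely many crystallographic classes, checking the concrete inclusions on representatives with the appropriate lattice shape.
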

\begin{proof} It is well known that there are precisely $17$ isomorphism classes of the plane crystallographic groups (cf. \cite{conway92}). We will use Conway's notation for the corresponding $2$-dimensional affine orbifolds [loc. cit.]. The following covering relations take place:
\begin{itemize}
\item $\star 632$ is covered by $632$ with degree 2;
\item $\star 442$ is covered by $442$ and $4\star 2$, each with degree 2;
\item $\star 333$ is covered by $333$ and $3\star 3$, each with degree 2;
\item $\star 2222$ is covered by $2222$ and $2\star 22$, each with degree 2. In turn, $2 \star 22$ is covered by $22\star$ and $22\times$, each with degree 2;
\item $\star \star$ is covered by $\star\times$ and $\times\times$ each with degree 2, and $\times\times$ is $2$-covered by $\circ$.
\end{itemize}
The groups of $\star 632$, $\star 442$, $\star 333$, $\star 2222$, and $\star \star$  are generated by reflections, and all 17 affine 2-orbifolds cover them with degree at most 4.
\end{proof}

Returning to the proof of the theorem, we have $H \le H_r$ generated by (affine) reflections of the horosphere $S$ (if $H$ has rank $1$, then $H = \langle h \rangle \le H_r$ with index $2$, as it follows from the representation of the translation isometry $h$ as a product of two reflections). We can extend these reflections to the hyperbolic reflections in $\Hy^3$ and thus obtain $H \le H^\prime$, $H^\prime$ is generated by reflections in $\Hy^3$ and $[H^\prime : H] \le 4$. Now, Fact 1 (\cite{agolbelolipetskystormwhyte08}) implies
\begin{equation*}
V_{PC}(3, \mathcal{Q}) \le 4 \,V_{PC}(3, \mathcal{Q}^\prime),
\end{equation*}
where $\mathcal{Q}^\prime = \Hy^3/H^\prime$. Moreover, Facts 3 and 4 (\emph{ibid.}) give
\begin{equation*}
V_{PC}(3, \mathcal{Q}^\prime) = \text{Vol}(\mathbb{S}^3) = 2\pi^2,
\end{equation*}
because $\mathcal{Q}^\prime$ is a reflection orbifold.

All together we obtain
\begin{equation*}
V_{PC}(3, \mathcal{O}) \le 8\pi^2.
\end{equation*}

Accidentally, the same upper bound for the conformal volume was obtained in \cite{agol06} for the case of reflective Bianchi groups (as well as more general arithmetic Kleinian groups).

The rest of the argument is based on the Li-Yau inequality which relates the hyperbolic volume, conformal volume, and the first non-zero eigenvalue of the Laplace operator on $\mathcal{O}$. It works exactly as in \cite{agol06} and \cite{belolipetsky09}, and the numerical computation in \cite[Section 4.3]{belolipetsky09} implies that there are 882 values of $m$ which satisfy the conditions.

\begin{remark}
Note that if it would be possible to generalise Lemma~\ref{lem1} to higher dimensions, the argument presented above would give an alternative proof of the finiteness of quasi-reflective groups, which was previously established by Nikulin in \cite{nikulin96}, and would provide effective bounds for their invariants. More precisely, we are interested in good upper bounds for the conformal volumes of the quotients of $\mathbb{R}^n$ by the affine crystallographic groups. At the present moment we do not know how to prove such bounds. For example, it would be interesting to compute or estimate the conformal volume of the group of automorphisms of the Leech lattice.
\end{remark}

\section{Technical lemmas}\label{sec:lemmas}
In order to reduce the list of admissible groups we will consider the connection between the class number $h_m$ of $K_m = \mathbb{Q}[\sqrt{-m}]$ and the geometry of $\mathcal{O} = \Hy^3/Bi(m)$. To this end we will use the generalisation of the results of Vinberg and Shvartsman which was carried out in Section~\ref{reflbackground} and the technical lemmas from this section.

We begin with a lemma which was stated without a proof (and under an additional assumption that $m \equiv 3\text{ (mod 4)})$ in Shvartsman's paper \cite{shvartsman98}. Considering the importance of this result for our argument we present the details here. Recall from Section~\ref{background} the definition of a $k$-reflection and the fact that its spinor norm is represented by $k$.

\begin{lemma}\label{lem:refllengths} %[\cite{shvartsman98}, Lemma 1]
The subgroup $\Gamma_r < \Bi$ of reflections consists of only $2$- and $2m$-reflections, and all such reflections in $\hBi$ lie in $\Gamma_r$.
\end{lemma}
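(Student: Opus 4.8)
The plan is to reduce everything to the arithmetic of the normal vector $e$ of a reflection $R_e$, normalised to have coprime integer coordinates, and to track the single invariant $k=(e,e)=f(e)$. From the explicit shape of $f$ in \eqref{form_f} one sees at once that $k$ is always even, so I write $k=2\kappa$, where $\kappa=-e_1e_2+e_3^2+me_4^2$ for $m\equiv 1,2\pmod 4$ (and analogously in the case $m\equiv 3\pmod 4$). The crystallographic condition \eqref{crystcondition} expressing $R_e\in\hBi=\mathrm{O}_0(L_m)$ then becomes the pair of divisibilities $k\mid 2d_e$ and $d_e\mid k$, where $d_e=\gcd\{(e,x):x\in L_m\}$ is the divisor of $e$, which I read off the coordinates as $d_e=\gcd(e_1,e_2,2e_3,2me_4)$ (and similarly for $m\equiv 3$). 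This packages the hypothesis into clean number-theoretic constraints relating $\kappa$ and $d_e$.

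The first step is to bound the possible lengths. Arguing prime by prime on $d_e$ and using that $e$ is primitive and that $m$ is square-free, I would show that every odd prime dividing $d_e$ divides $m$, and more precisely that $\kappa\mid 2m$, so that $k\mid 4m$. In particular only finitely many values of $k$ occur, and modulo squares the spinor norm $[k]\in\Q^*/(\Q^*)^2$ lies in the subgroup $\langle[2],[m]\rangle$.

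The decisive step is to separate the reflections of $\Bi$ from the remaining reflections of $\hBi$ by their spinor norm, which (as recorded after \eqref{crystcondition}) is exactly the class of $k$. Since complex conjugation is the reflection $\tau=R_{e_3}$ with $(e_3,e_3)=2m$, and since the spinor-norm image of the orientation-preserving part $\mathrm{PGL}_2(\Am)$ is $\{[1],[m]\}$, every orientation-reversing element of $\Bi$—in particular every reflection in $\Gamma_r<\Bi$—has spinor norm in $\{[2],[2m]\}$. Combined with $k\mid 4m$ and the square-freeness of $m$ (so that $k=2s^2$ or $k=2ms^2$ forces $s=1$), this pins the length down to $k\in\{2,2m\}$ and proves the first assertion.

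For the converse I would invoke the class-group description of Section \ref{reflbackground}, namely $\hBi/\mathrm{PGL}_2(\Am)=C_2(\Am)\rtimes\langle\tau\rangle$. A reflection $R_e\in\hBi$ determines a coset here, equivalently a class in $C_2(\Am)$, and $R_e$ lies in $\Bi$ precisely when this class is trivial. The point is that this class is detected by the spinor norm: the classes $\{[1],[m]\}$ are exactly those coming from the non-principal ideals (hence from $\hBi\setminus\Bi$), while $\{[2],[2m]\}$ correspond to the trivial class; these two sets are disjoint precisely when $C_2(\Am)$ is non-trivial, that is, exactly when extra reflections are present. Thus a reflection with $k\in\{2,2m\}$ has trivial class and so lies in $\Bi$, i.e.\ in $\Gamma_r$, which is the second assertion. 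I expect the main obstacle to be this last dictionary between the square class of $k$ and the ideal class in $C_2(\Am)$, i.e.\ the precise computation of the spinor-norm images of $\mathrm{PGL}_2(\Am)$ and $\widehat{\mathrm{PGL}}_2(\Am)$ (for which I would lean on the spinor-norm formalism of \cite{EGM}); a secondary technical nuisance is the prime-by-prime pinning-down of $k$ when $m\equiv 2\pmod 4$, where $2$ ramifies and the naive square-class argument must be supplemented to exclude values such as $k=8$.
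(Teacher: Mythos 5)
Your overall route is the paper's: normalise $e$ to be primitive, read the spinor norm off $k=(e,e)$, use $\theta(\tau)=2m{\Q^*}^2$ and the spinor classes of $\mathrm{PGL}_2(\Am)$, and separate $\Bi$ inside $\hBi$ by square classes. Your forward half is essentially sound, and it even supplies an arithmetic pinning-down of $k$ (including the $k=8$ nuisance for $m\equiv 2 \pmod 4$) that the paper leaves implicit. But one intermediate claim is false: it is not true that every reflection of $\hBi$ has $[k]\in\langle[2],[m]\rangle$, nor does this follow from $k\mid 4m$ (consider $k=2d$ for a proper divisor $d\mid m$). The paper's own data refutes it: for $m=33$ the primitive vector $e=(6,6,3,1)$ of Table~\ref{vecs33} gives a reflection of $\widehat{Bi}(33)$ with $(e,e)=-72+18+66=12$, of square class $[3]\notin\langle[2],[33]\rangle$; likewise $e=(16,2,1,1)$ gives $(e,e)=4$, class $[1]$. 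This does not damage your first assertion, where the constraint $[k]\in\{[2],[2m]\}$ comes from membership in $\Bi$ rather than from this step, but it shows the advertised ``prime-by-prime'' analysis was not actually carried out.

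The genuine gap is in the converse. The dictionary you offer is wrong on both counts: $\{[1],[m]\}$ are the spinor norms of the \emph{rotation} part $\mathrm{PGL}_2(\Am)\subset\Bi$ (the paper computes $\theta([g_0])=m{\Q^*}^2$); they are not ``exactly those coming from the non-principal ideals'' --- witness the class-$[3]$ reflection above, which lies in $\hBi\setminus\Bi$ --- and the disjointness of $\{[1],[m]\}$ from $\{[2],[2m]\}$ is a square-class triviality valid for every square-free $m\neq 2$, with no relation to non-triviality of $C_2(\Am)$. More importantly, what the converse needs is not the spinor-norm \emph{images} of $\mathrm{PGL}_2(\Am)$ and $\widehat{\mathrm{PGL}}_2(\Am)$ but a \emph{kernel} statement: that an element of $\mathrm{SO}(L_m)$ of trivial spinor norm actually comes from $\PSL(\Am)$, i.e. $\Phi^*(\PSL(\Am))=\mathrm{O}'(L_m)$ (James--Maclachlan \cite[Theorem~2.1]{jamesmaclachlan96}, fed into the exact sequence of \cite[\S 11]{EGM}). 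This is the heart of the paper's proof: for a reflection $\gamma\in\hBi$ with $k\in\{2,2m\}$, the rotation $\gamma\tau$ has spinor norm $[1]$ or $[m]$, hence lies in $\Phi^*(\PSL(\Am))$ or $\Phi^*(g_0\PSL(\Am))$, and so $\gamma\in\Bi$. Without this exactness input, two distinct cosets of $\mathrm{PGL}_2(\Am)$ in $\hBi$ could a priori share a spinor class, and your ``trivial class, hence in $\Bi$'' conclusion does not follow; your appeal to the spinor-norm formalism of \cite{EGM} points at the right source but mis-states what must be extracted from it.
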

\begin{proof}
A reflection $\gamma$ can be written as a composition $[g]\tau$ where $g\in\mathrm{GL}_2(\C)$ acts as in \eqref{transformation} and $\tau$ is a complex conjugation. Assume (with an abuse of notation) that $g\in \PGL(\Am)$. Then since $\PGL(\Am)\ge\PSL(\Am)$ with index $2$, $g\in\PSL(\Am)$ or $g\in g_0\PSL(\Am)$, where we can take $g_0 = \left( \begin{smallmatrix} -1 & 0\\ 0&1 \end{smallmatrix}\right)$.

Let $V$ be a $4$-dimensional regular quadratic space over $\Q$ with a quadratic form $f$ defined by \eqref{form_f}. Following \cite[\S~11]{EGM} (see also \cite{jamesmaclachlan96}), we have an exact sequence
\begin{equation*}
1 \to \PSL(\Am) \stackrel{\Phi^*}{\to} \mathrm{SO}(V) \stackrel{\theta}{\to} \mathrm{cok} \to 1,
\end{equation*}
where $\Phi^*$ maps $g \in \PSL(\Am)$ to the corresponding transformation $[g]$ of the form \eqref{transformation} and $\theta$ is the spinor norm map $\mathrm{SO}(V) \to \Q^*/{\Q^*}^2$. By \cite[Theorem~2.1]{jamesmaclachlan96}, $\Phi^*(\PSL(\Am)) = \mathrm{O}'(L_m)$, where the lattice $L_m$ can be defined as in \eqref{basis} and $\mathrm{O}'(L_m)$ denotes the kernel of the spinor norm map of $\mathrm{SO}(L_m)$. It follows that for $\gamma = [g]\tau$ stabilising the lattice $L_m$, $g\in\PGL(\Am)$ if and only if $\theta([g]) = {\Q^*}^2$ or $\theta([g]) = \theta([g_0]){\Q^*}^2$. This in turn means that
\begin{equation}\label{l6.1:cond}
\theta(\gamma) = \theta(\tau){\Q^*}^2 \text{ or } \theta(\gamma) = \theta(\tau)\theta([g_0]){\Q^*}^2.
\end{equation}

To compute the spinor norms of $[g_0]$ and $\tau$ we need to consider two cases:

First assume that $m \equiv 1,2\text{ (mod 4)}$. Then in the basis of $V$ as in \eqref{basis}, $\tau$ corresponds to the reflection $R_e$ with $e = (0,0,0,1)$. Its spinor norm is $f(e,e){\Q^*}^2 = 2m{\Q^*}^2$. The transformation $[g_0]$ maps $(x_1,x_2,x_3,x_4)$ to $(x_1,x_2,-x_3,-x_4)$. It can be written as a product of two reflections $R_u$ and $R_v$ with $u = (0,0,1,0)$ and $v = (0,0,0,1)$, and hence $\theta([g_0]) = f(u,u)f(v,v){\Q^*}^2 = m{\Q^*}^2$.

In the second case $m \equiv 3\text{ (mod 4)}$. In the basis \eqref{basis}, $\tau$ corresponds to the reflection $R_e$ with $e = (0,0,-1,2)$, $\theta(\tau) = 2m{\Q^*}^2$. The transformation $[g_0]$ again maps $(x_1,x_2,x_3,x_4)$ to $(x_1,x_2,-x_3,-x_4)$. It can be written as a product of $R_u$ and $R_v$ with $u = (0,0,1,0)$ and $v = (0,0,-1,2)$, and $\theta([g_0]) = m{\Q^*}^2$.

\smallskip

We now come back to the formula \eqref{l6.1:cond} to conclude that a reflection $\gamma \in \hBi$ is in $\Bi$ if and only if $\theta(\gamma) = 2m{\Q^*}^2$ or $2{\Q^*}^2$ (where the first case corresponds to $g \in \PSL(\Am)$ and the second to $g \not\in \PSL(\Am)$).
\end{proof}

The next lemma was proved in \cite{shaiheev90}. Here we give another argument for the same result based on Lemma~\ref{lem:refllengths}.

\begin{lemma}\label{lem:sha}
Assume that $\hBi$ is reflective and $S$ is a (minimal) set of generating reflections of $\hBi_r$. Let $H$ be the subgroup of $\hBi$ generated by all reflections in $S$ that are not in $\Bi$. Then $\Bi$ is reflective if and only if $H$ is finite.
\end{lemma}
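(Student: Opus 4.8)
The plan is to reduce the assertion to a combinatorial computation inside the Coxeter group $\hBi_r$. Since $\hBi$ is reflective, $\hBi_r$ is a finite-covolume hyperbolic reflection group, and $(\hBi_r, S)$ is a Coxeter system whose simple reflections are the reflections in the facets of the fundamental polyhedron $P$; in particular $\hBi_r$ is a lattice. As noted in Section~\ref{background}, $\Bi$ also has finite covolume, so $\Bi$ is reflective if and only if its reflection subgroup $\Bi_r$ (the group generated by all reflections of $\Bi$) has finite covolume; and since $\Bi_r \le \hBi_r$ with $\hBi_r$ a lattice, this holds if and only if $[\hBi_r : \Bi_r] < \infty$. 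The lemma therefore follows once I show that $\Bi_r \trianglelefteq \hBi_r$ with $[\hBi_r : \Bi_r] = |H|$.

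First I would identify $\Bi_r$ as a normal closure. Every reflection of $\hBi$ lies in $\hBi_r$ by definition, and the reflections of $\hBi_r$ are exactly the conjugates $w s w^{-1}$ of the simple reflections $s \in S$. The type of a reflection (a $2$- or $2m$-reflection versus the rest) is recorded by its spinor norm, which is a conjugacy invariant: an element of $\hBi = \mathrm{O}_0(L_m)$ preserves the form $f$ and carries primitive lattice vectors to primitive lattice vectors, so $w R_e w^{-1} = R_{we}$ has the same normalised length as $R_e$. By Lemma~\ref{lem:refllengths} the reflections of $\hBi$ lying in $\Bi$ are precisely the $2$- and $2m$-reflections. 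Writing $S = S_0 \sqcup S_1$ with $S_0 = S \cap \Bi$ and $S_1 = S \setminus \Bi$, so that $H = \langle S_1 \rangle$, it follows that the reflections of $\Bi$ are exactly the set $\{\, w s w^{-1} : w \in \hBi_r,\ s \in S_0 \,\}$. Hence $\Bi_r$ is the normal closure $N(S_0)$ of $\langle S_0 \rangle$ in $\hBi_r$, and in particular $\Bi_r \trianglelefteq \hBi_r$.

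It then remains to compute $\hBi_r / N(S_0)$. This quotient has the Coxeter presentation of $\hBi_r$ together with the extra relations $s = 1$ for $s \in S_0$. The only mechanism by which killing a generator $s$ can force a neighbour $t$ to die is a relation $(st)^{n} = 1$ with $n$ odd, that is, an odd-labelled edge of the Coxeter diagram. But an odd dihedral angle $\pi/n$ between two facets makes the two facet reflections conjugate inside the dihedral group they generate, hence of equal spinor norm and equal type; so every odd edge joins two vertices of the same type, and no generator of $S_1$ is odd-connected to $S_0$. Thus $S_0$ is closed under odd-labelled edges, only the generators in $S_0$ die, and $\hBi_r / N(S_0) \cong W_{S_1} = \langle S_1 \rangle = H$ by the standard fact that quotienting a Coxeter group by the normal closure of an odd-closed set of generators returns the complementary standard parabolic subgroup. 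Consequently $[\hBi_r : \Bi_r] = |H|$, which combined with the first paragraph shows that $\Bi$ is reflective if and only if $H$ is finite.

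The main obstacle is this last step, and specifically the control of odd-labelled edges. Without the observation that an odd edge can only join two reflections of the same spinor type, the normal closure $N(S_0)$ could a priori absorb generators from $S_1$, and the clean identification $\hBi_r / \Bi_r \cong H$ would break down. Once the conjugacy-invariance of the spinor norm and its behaviour inside dihedral subgroups are established, the remainder is routine.
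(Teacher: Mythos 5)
Your proposal is correct and takes essentially the same route as the paper: Lemma~\ref{lem:refllengths} together with the conjugation-invariance of the spinor norm gives $\Bi_r \lhd \hBi_r$, the quotient $\hBi_r/\Bi_r$ is identified with $H$, and reflectivity of $\Bi$ is then read off from finiteness of this index. Your odd-edge analysis of the Coxeter presentation is simply a careful justification of the step the paper dispatches with the single phrase ``By the definition of $H$ we have $H \cong \hBi_r/\Bi_r$,'' so it fills in detail rather than changing the argument.
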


\begin{proof}
By Lemma~\ref{lem:refllengths} the reflection subgroup $\Bi_r$ is generated by all the $2$- and $2m$-reflections from $\hBi_r$. The spinor norm is invariant under conjugation by the elements with an integer spinor norm, hence this set is closed under conjugation by the elements of $\hBi_r$ and $\Bi_r \lhd \hBi_r$. By the definition of $H$ we have $H \cong \hBi_r/\Bi_r$. Now the group $\hBi$ is reflective implies $\hBi/\hBi_r$ is finite, therefore $\Bi/\Bi_r$ is finite if and only if $H$ is a finite group.
\end{proof}

As was already mentioned in Section~\ref{reflbackground}, by a result of Bianchi and Hurwitz the number of cusps of $\Hy^3/\Bi$ is given by the class number $h_m$ of the field $K_m$. This leads to the following property of the reflective and quasi-reflective Bianchi groups which can be checked algorithmically.

Recall that $\hBi/\Bi \cong C_2(O_m)$, the $2$-periodic part of the class group of $K_m$, whose order is given by
$$
h_{2,m} = \left\{
  \begin{array}{ll}
    2^t     &  \; \text{ for } m = 1\:(\text{mod }4),\\
    2^{t-1} &  \; \text{ for } m = 2,3\:(\text{mod }4),\\
   \end{array}\right.
$$
where $t$ denotes the number of the prime divisors of $m$.

\begin{proposition}\label{QR-cuspcnt}
Let $\Gamma$ be a lattice in $\isom$ and $\Gamma_r$ its subgroup generated by (all) reflections.

For $\Gamma$ being reflective it is necessary that
\begin{itemize}
\item[(1)] If $\Gamma = \Bi$ then $\Hy^3/\Gamma_r$ has at most $12h_m$ cusps;
\item[(2)] If $\Gamma = \hBi$ then $\Hy^3/\Gamma_r$ has at most $12h_mh_{2,m}$ cusps.
\end{itemize}
For $\Gamma$ being quasi-reflective, let $v$ be a vertex of the Coxeter diagram of $\Gamma_r$ such that the reflection hyperplane corresponding to $v$ does not pass through the singular point at infinity. The necessary condition are
\begin{itemize}
\item[(3)] If $\Gamma = \Bi$ then $v$ is adjacent to at most $12(h_m-1)$ cusp pairs;
\item[(4)] If $\Gamma = \hBi$ then $v$ is adjacent to at most $12h_{2,m}(h_m-1)$ cusp pairs.
\end{itemize}
\end{proposition}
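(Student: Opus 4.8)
The plan is to count the cusps of $\Hy^3/\Gamma_r$ (resp.\ the cusp pairs along a fixed face) by pushing them down to the cusps of the base orbifold $\mathcal{O}=\Hy^3/\Gamma$ and controlling the size of each fibre. Recall from Section~\ref{reflbackground} the Bianchi--Hurwitz correspondence: the rational points $PK_m^2$ at infinity fall into $h_m$ ideal classes, which are exactly the cusps of $\Hy^3/\mathrm{PGL}_2(\Am)$, and that $\hBi/\Bi\cong C_2(\Am)$ has order $h_{2,m}$. Every cusp of $\Hy^3/\Gamma_r$ lies over one such class, so it suffices to bound the number of cusps of $\Hy^3/\Gamma_r$ lying over a single cusp of the base; the passage from $\Bi$ to $\hBi$ will contribute the extra factor $h_{2,m}$, since the reflection subgroup then sits inside a group that is $h_{2,m}$ times larger.

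First I would set up the fibration. Writing $\Gamma=\Gamma_r\rtimes H$ as in (\ref{Gamma_rtimesH}), the quotient $\Hy^3/\Gamma_r\to\mathcal{O}$ is by the $H$-action, and the cusps lying over a fixed cusp $c$ with parabolic point $p$ are precisely the $\Gamma_r$-orbits inside $\Gamma\cdot p$. The stabiliser $\Gamma_p$ is a $2$-dimensional crystallographic group acting on a horosphere, described by its translation lattice together with its point group $P_0\hookrightarrow\mathrm{O}(2)$. The reflections of $\Gamma$ fixing $p$ generate a Euclidean reflection subgroup, and the distinct cusps over $c$ are governed by the cosets of this reflection part, that is by (a subquotient of) $P_0$. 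The decisive input is then the crystallographic restriction in the plane: a finite subgroup of $\mathrm{GL}_2(\Z)$ has order in $\{1,2,3,4,6,8,12\}$, the maximum $12$ being the dihedral symmetry $D_6$ of the hexagonal lattice. Hence at most $12$ cusps of $\Hy^3/\Gamma_r$ lie over each cusp of $\mathcal{O}$, which gives (1); tracking the same count through the index-$h_{2,m}$ extension $\Bi\le\hBi$ gives (2).

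For the quasi-reflective statements I would localise the argument to the chosen face $\Pi_v$. Since $\Pi_v$ does not pass through the singular point $q$, each of its ideal vertices is a parabolic point fixed by reflections, hence lies over a \emph{non-singular} cusp; by Section~\ref{reflbackground} the singular point corresponds to the class $[\mathfrak{c}]$ with $\tau\circ[\mathfrak{c}]=[\mathfrak{c}]^{-1}$, so the ideal vertices of $\Pi_v$ meet at most $h_m-1$ of the ideal classes. A cusp pair adjacent to $v$ is one of the two parallel hyperplanes at such an ideal vertex, so the same local bound of $12$ per class yields at most $12(h_m-1)$ cusp pairs, and the factor $h_{2,m}$ re-enters for $\hBi$ exactly as in the reflective case.

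The main obstacle is the local factor $12$: one must verify that the cusps of $\Hy^3/\Gamma_r$ over a fixed cusp are genuinely in bijection with a subquotient of the planar point group, and not with the full, possibly large, symmetry group $H$. This should follow from the fact that the reflections already present in $\Gamma$ realise the reflection part of every cusp stabiliser, so that only the rotational part of $P_0$ (cyclic of order at most $6$, or dihedral of order at most $12$ once reflections are included) can separate $\Gamma_r$-orbits. A secondary point, to be handled carefully rather than merely asserted, is the precise bookkeeping of the $h_{2,m}$ factor for the extended group and the correct exclusion of the singular class — the ``$-1$'' — in the quasi-reflective count.
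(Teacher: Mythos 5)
Your skeleton matches the paper's: count the cusps of $\Hy^3/\Gamma_r$ fibrewise over the at most $h_m$ cusp classes of the base, bound each fibre by $12$, insert $h_{2,m}$ for the extended group, and drop the singular class in the quasi-reflective count. But your justification of the constant $12$ is not correct. The fibre over a cusp of $\mathcal{O}$ with parabolic point $p$ is the double coset space $\Gamma_r\backslash\Gamma/\Gamma_p \cong H/\pi(\Gamma_p)$, where $H$ is the non-reflective part in $\Gamma=\Gamma_r\rtimes H$ and $\pi\colon\Gamma\to H$ the projection; this is a coset space of $H$, \emph{not} a subquotient of the planar point group $P_0$ of the cusp stabiliser. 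For instance, if $\Gamma_p\subset\Gamma_r$ while $H\cong A_4$, the fibre has $12$ elements even though the rotational part of $P_0$ may be trivial; conversely a large $P_0$ with trivial $H$ gives a fibre of size $1$. The correct input, and the one the paper uses, is Klein's classification of the finite subgroups of Bianchi groups (maximal order $12$, attained by $A_4$ --- not by the hexagonal $D_6$ of your crystallographic restriction; the coincidence of the two maxima masks the error) combined with Vinberg's lemma identifying $\Gamma/\Gamma_r$ with a subgroup of $\sym$ realised inside $\Gamma$; for $\hBi$ the maximal finite order becomes $12h_{2,m}$ because $[\hBi:\Bi]=h_{2,m}$. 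Your own bookkeeping exposes the flaw: a bound coming from finite subgroups of $\mathrm{GL}_2(\Z)$ is $\le 12$ independently of the field, so your mechanism cannot produce the factor $h_{2,m}$ in (2) and (4); the remark that ``the reflection subgroup sits inside a group $h_{2,m}$ times larger'' does not by itself enlarge a fibre bound that you claimed was local.

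The quasi-reflective cases (3)--(4) contain the more serious gap. There $H\cong\sym$ is an \emph{infinite} affine crystallographic group, so no bound of the form ``$12$ per class'' transfers from the reflective case --- indeed $\Hy^3/\Gamma_r$ then has infinitely many cusps, and only the cusps adjacent to the fixed mirror $\Pi_v$ are being counted. What must actually be proved is that among the infinitely many elements of $\sym$ only a bounded number can identify two cusps lying on the circle $C=\partial\Pi_v$ at infinity. The paper establishes this by a trichotomy: symmetries stabilising $C$ lie in a finite subgroup of $\Gamma$ (again of order $\le 12$, resp.\ $\le 12h_{2,m}$, by Klein); symmetries taking $C$ to a tangent circle number at most $6$ by the planar kissing number, and their identifications are realised by rotations of $C$ through $\pi/3$; symmetries taking $C$ to an orthogonally intersecting circle produce equivalence classes of size $2$ or $4$, also realisable by rotations of $C$. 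Together these give the maximal orbit size $12$ (resp.\ $12h_{2,m}$) on the set of cusps adjacent to $v$; your proposal asserts the bound with no counterpart of this analysis, and the planar point-group argument you invoke says nothing about isometries that move $C$. Finally, your exclusion of the singular class (the ``$-1$'') is right in substance but needs the correct mechanism: since the closure of the Coxeter fundamental polyhedron is a strict fundamental domain for $\Gamma_r$, a parabolic point on $\partial\Pi_v$ that were $\Gamma$-equivalent to the singular point $q$ would force $q\in\partial\Pi_v$, contradicting the hypothesis --- this fundamental-domain argument, rather than the appeal to the ideal class $[\mathfrak{c}]$, is what makes the subtraction legitimate.
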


\begin{proof}
By a lemma of Vinberg (see Section~\ref{QRbackground}), in both reflective and quasi-reflective cases the quotient group $\Gamma/\Gamma_r$ is isomorphic to a (sub)group of the group of symmetries of the fundamental Coxeter polyhedron $P$ of $\Gamma_r$ which, moreover, can be identified with a subgroup of $\Gamma$. Note that this result applies also when $\Gamma = \Bi$, in which case $\Gamma/\Gamma_r$ might be isomorphic to a proper subgroup of
$\sym$.

Finite subgroups of Bianchi groups were classified by Klein in \cite{klein1875}. They are the trivial group, $\Z/2\Z$, $\Z/3\Z$, $\Z/2\Z \times \Z/2\Z$, $S_3$ and $A_4$, and the maximal order is $12$. It follows that the maximal order of a finite subgroup of an extended Bianchi group is $12h_{2,m}$.

Assume that $\Gamma$ is reflective and the Coxeter polyhedron $P$ of $\Gamma_r$ has $l$ cusps. Then $\sym$ is finite and the action of any subgroup $F\le\sym$ on the set of cusps of $P$ has to have at most $h_m$ orbits (corresponding to the cusps of  $\Hy^3/\Gamma$). As $F$ is isomorphic to a finite subgroup of $\Gamma$, we conclude that $l \le 12h_m$ or $\le 12h_{2,m}h_m$ for $\Gamma = \Bi$ or $\hBi$, respectively. This proves conditions (1) and (2).

In the quasi-reflective case assume that $v$ is as above and $\Pi_v$ is the corresponding reflection plane. Then each cusp pair $(v,v')$ can be completed to a cusp of the (infinite volume) quotient space $\Hy^3/\Gamma_r$ and all these cusps are different. Moreover, we can identify these cusps with the points on the boundary $\partial\Pi_v$, which is represented by a circle $C$ at infinity where $\sym$ acts by affine transformations.

We denote this set of cusps (or cusp pairs) by $\mathcal{S}$, and let $l$ be its cardinality. The set $\mathcal{S}$ is not stable under the action of $\mathrm{Sym}(P)$, but all we need is to look at the transformations $\gamma\in\sym$ that may identify any of the two cusps from $\mathcal{S}$. These are of three possible types:
\begin{itemize}
\item[(a)] stabilising the circle $C$ and hence in a finite subgroup $F < \sym$;
\item[(b)] mapping $C$ to a tangent circle. (From the solution of the kissing number problem in the plane we know that there are $\le 6$ such $\gamma$'s. The images of $p\in \mathcal{S}$ under these transformations can be also obtained by rotations of $C$ by the angle $\pi/3$, if these rotation are in $\sym$.)
\item[(c)] mapping $C$ to a circle $C'$ which intersects $C$ orthogonally. (An elementary geometrical argument shows that if $p, p' \in \mathcal{S}$ are identified by such $\gamma$ then the subset of the cusps from $\mathcal{S}$ which are equivalent to $p$ consists of $2$ or $4$ elements that can be also identified by a rotation of $C$.)
\end{itemize}
From this we conclude that the maximal size of the orbit of the action of $\sym$ on $\mathcal{S}$ is $12$ in $\Gamma = \Bi$ case and $12h_m$ if $\Gamma = \hBi$. It remains to note that one cusp of $\Hy^3/\Gamma$ always comes from the singular vertex of $P$, which is not adjacent to $v$ by the assumption. Hence we obtain that  $l/12$ or $l/12h_{2,m} \le h_m - 1$, depending on $\Gamma = \Bi$ or $\hBi$, respectively.
\end{proof}

The bounds of Proposition~\ref{QR-cuspcnt} are often not optimal but still quite effective for proving non-reflectiveness (or non-quasi-reflectiveness) of certain groups. In the next section we will apply this proposition and also another known criterion in order to finish the proof of the classification results.

\section{Proofs of the theorems}\label{(Q)reflbianchi}

% In this section we will prove the theorems which were stated in Section~\ref{sec:statements}.

In Section~\ref{QRbackground} we obtained an upper bound on the value of $m$ for which the groups $Bi(m)$ and $\widehat{Bi}(m)$ may be quasi-reflective. This bound appears to be identical to the bound in \cite{belolipetsky09} for the fields of definition of non-cocompact arithmetic Kleinian reflection groups. Thus, by the result of computation in \cite{belolipetsky09}, we have a list of $882$ fields which can be fields of definition of
reflective/quasi-reflective Bianchi or extended Bianchi groups.

This list of fields can be filtered further according to Proposition~\ref{classfilter}. Using GP/PARI software we obtain that out of the $882$ fields there are:
\begin{itemize}
\item $65$ candidates for reflective Bianchi groups;
\item $188$ candidates for reflective extended Bianchi groups;
\item $203$ candidates for quasi-reflective Bianchi groups; and
\item $204$ candidates for quasi-reflective extended Bianchi groups.
\end{itemize}
The largest value of $m$ in these lists is $7035$. Of the $204$ fields that pass the most permissive restriction on $C(\Am)$, there is only one that does not satisfy Proposition~~\ref{classfilter}(3). It corresponds to $m = 2379$ for which $C(\Am) = (\Z/4\Z)^2$. The full lists can be found in \cite{mcleod_thesis}.

We now apply Vinberg's algorithm to the quadratic forms given in Section~\ref{background} and iteratively generate the mirrors of reflections in an extended Bianchi group using a computer program. The algorithm terminates with a reflective extended Bianchi group for $m \le 21$, $m = 30$, $33$ and $39$. This provides the list of groups in Theorem~\ref{thm2}. In each of the cases we can apply Lemmas~\ref{lem:refllengths} and \ref{lem:sha} to determine when $\hBi$ is reflective, and $\Bi$ is not. This occurs for $m = 14$, $17$, $21$, $30$, $33$, and $39$, which leads to the list of groups in Theorem~\ref{thm1}. If the algorithm does not terminate after a reasonable number of iterations we try to show that the group is not reflective. In order to do so we count the number of cusps produced by the algorithm and apply Proposition~\ref{QR-cuspcnt} or show that the Coxeter polyhedron has an infinite order symmetry associated to a loxodromic isometry of $\Hy^3$. The second criterion was previously used by Bugaenko in \cite{bugaenko92}. We will illustrate it with a quasi-reflective example later on. After applying Proposition~\ref{QR-cuspcnt} we are left with $10$ % = 21-11
candidates for reflective Bianchi groups and $120$ % = 137-17
candidates for reflective extended Bianchi groups (excluding the ones which are already considered). Note that these lists depend on the depth of the application of Vinberg's algorithm but regardless of it some groups would remain. In each of the remaining cases we were able to produce the corresponding loxodromic isometries and hence to prove non-reflectiveness. We refer to \cite{mcleod_thesis} for the details of the computation.

\medskip

Most of the results of our computations coincide with the earlier results of Ruzmanov and Shaiheev \cite{ruzmanov90, shaiheev90} but there are also some differences which we are going to discuss now. Most notably, the group $\widehat{Bi}(33)$ (Figure~\ref{fig2}) does not appear in Shaiheev's list. The proof that it is reflective is obtained via Vinberg's criterion (Propositions \ref{bugaenko1} and \ref{bugaenko2}), and is expounded in Table~\ref{proofofm=33}, while the list of vectors normal to the mirrors of the fundamental domain of $\widehat{Bi}(33)$ is given by Table~\ref{vecs33}.

\begin{table}[ht]
\centering
\caption{\label{vecs33}Vectors normal to the mirrors of the fundamental domain of $\widehat{Bi}(33)$.}
\begin{tabular}{l}
$m=33$\\
\hline
(0,0,-1,0)\\
(1,0,1,0)\\
(0,0,0,-1)\\
(33,0,0,1)\\
(-1,1,0,0)\\
(16,2,1,1)\\
(6,6,3,1)\\
(8,4,1,1)\\
(11,3,1,1)\\
(11,11,0,2)\\
(99,33,0,10)\\
(121,22,0,9)\\
(90,18,3,7)\\
(37,8,0,3)\\
(264,66,0,23)
\end{tabular}
\end{table}

To show that this polyhedron has finite volume, consider the edge (elliptic subdiagram) consisting of the mirrors labelled $1$ and $5$. By inspection of the Coxeter diagram in Figure~\ref{m=33} we can see that the vertices $1$ and $5$ are part of an elliptic subdiagram $1, 3, 5$, and another elliptic subdiagram $1, 5, 10$ (both subdiagrams are three copies of the elliptic diagram $A_1$ of rank $1$), and no other elliptic subdiagrams of rank $3$ or parabolic subdiagrams of rank $2$. Moreover, we can see that $1,3,5,10$ do not form an elliptic subdiagram of rank $4$. Therefore the edge of the fundamental polyhedron formed by the intersection of $\Pi_1$ and $\Pi_5$ has, at each end, a vertex that is within the interior of hyperbolic space. All other elliptic subdiagrams of rank $2$ can be handled in the same manner, which shows that the Coxeter diagram has finite volume, and the presence of parabolic subdiagrams confirms that $\widehat{Bi}(33)$ is non-compact. By Lemma~\ref{lem:refllengths} we can identify the mirrors that are not in $Bi(33)$, and these correspond to the filled vertices in the figure. By Lemma~\ref{lem:sha} we conclude that $Bi(33)$ is not reflective.

\begin{table}[ht]
\caption{\label{proofofm=33}Elliptic subdiagrams of rank $2$ of the Coxeter diagram of $\widehat{Bi}(33)$ and their completions to either elliptic subdiagrams of rank $3$ or parabolic subdiagrams of rank $2$. (Only half are listed; the remaining subdiagrams are given by the symmetry of the Coxeter diagram e.g. $1,3$ is equivalent to $1,15$.)}
\centering
\begin{tabular}{l|l|l}
Elliptic diagram & First completion & Second completion\\
\hline
1,3 & 2,4 ; $2 \times \tilde{A}_1$ & 5 ; $ 3 \times A_1$\\
1,4 & 2,3 ; $2 \times \tilde{A}_1$ & 6 ; $ A_1 + B_2 $\\
1,5 & 3 ; $3 \times A_1 $ & 10 ; $ 3 \times A_1$\\
1,8 & 10 ; $ A_1 + B_2 $ & 11 ; $ A_1 + B_2 $\\
1,10& 5 ; $3 \times A_1$ & 8 ; $ A_1 + B_2 $\\
2,3 & 1,4 ; $2 \times \tilde{A}_1$ & 5 ; $ A_1 + A_2 $\\
2,4 & 1,3 ; $2 \times \tilde{A}_1$ & 6 ; $ 3 \times A_1 $\\
2,5 & 3 ; $A_1 + A_2$ & 7 ; $ A_1 + A_2$\\
2,6 & 4 ; $3 \times A_1$ & 9 ; $ B_3 $\\
2,7 & 5 ; $ A_1 + A_2 $ & 8 ; $ A_1 + B_2 $\\
2,8 & 7 ; $A_1 + B_2$ & 9 ; $ B_3 $\\
2,9 & 6 ; $ B_3 $ & 8 ; $ B_3 $\\
3,5 & 1 ; $3 \times A_1$ & 2 ; $A_1 + A_2$\\
4,6 & 1 ; $A_1 + B_2$ & 2 ; $3 \times A_1$\\
5,7 & 2 ; $A_1 + A_2$ & 10 ; $ 3 \times A_1$\\
5,10& 1 ; $3 \times A_1$ & 7 ; $ 3 \times A_1$\\
7,8 & 2 ; $A_1 + B_2$ & 10 ; $ 3 \times A_1$\\
7,10& 5 ; $3 \times A_1$ & 8 ; $ 3 \times A_1$\\
8,10& 1 ; $A_1 + B_2$ & 7 ; $3 \times A_1$\\
\end{tabular}
\end{table}

Let us use this opportunity to correct the mistakes in the data in \cite{shaiheev90} for $m = 17$ and $m = 21$. The vectors which we obtained are given on Table~\ref{fundamentalvectors}, where the differences with \cite{shaiheev90} are indicated. Our computation implies that the group $Bi(21)$ is not reflective in the contrary to what is claimed in \cite{shaiheev90} and some subsequent articles. The computation in all the other cases of Theorems~\ref{thm1} and \ref{thm2} agrees with the previous results.
\begin{table}[ht]
\centering
\caption{Corrections to the list of vectors in $\mathbb{R}^4$ normal to mirrors of the fundamental domains of the reflective extended Bianchi groups $\widehat{Bi}(17)$ and $\widehat{Bi}(21)$. Vectors which are labelled `\dag' are misprinted, while vectors labelled `\ddag' are absent from \cite{shaiheev90}.\label{fundamentalvectors}}
\begin{tabular}{l l}
 $m=17$ & $m=21$ \\
  \hline
 $(0,0,-1,0)$& $(0,0,-1,0)$ \\
 $(1,0,1,0)$&  $(1,0,1,0)$ \\
 $(0,0,0,-1)$&$(0,0,0,-1)$ \\
$(m,0,0,1)$&  $(m,0,0,1)$ \\
$(-1,1,0,0)$&  $(-1,1,0,0)$ \\
 $(8,2,1,1)$ &  $(10,2,1,1)$ \\
 $(4,4,1,1)$&  $(6,3,0,1)$ \\
 $(68,34,17,11)$&   $(6,4,2,1)$ \ddag\\
 $(19,8,0,3)$\dag &   $(42,42,21,8)$ \ddag\\
 $(17,9,1,3)$ &   $(14,14,3,3)$ \ddag\\
 $(136,68,17,23)$&   $(63,63,21,13)$ \ddag\\
 $(85,51,0,16)$&\\
 $(204,102,0,35)$\dag&\\
\end{tabular}
\end{table}

\medskip

We now come to the quasi-reflective case. The Coxeter diagrams of the reflective lattices from Theorems~\ref{thm1} and \ref{thm2} are given in Figures~\ref{fig1} and \ref{fig2}. Using Lemma~\ref{lem:refllengths} we have identified in these figures reflections which are in the extended Bianchi group but are not in the Bianchi group. This information allows us to check which of the reflective extended Bianchi groups contain quasi-reflective Bianchi groups. It occurs precisely when the only reflections represented by filled vertexes of a Coxeter diagram in Figures~\ref{fig1} and \ref{fig2} are those that bound a single cusp. The Bianchi group is quasi-reflective of rank one if the filled vertexes form one cusp pair, and of rank two if all four vertexes are filled. We see that the latter occurs for $m = 14$, $17$, and $39$.

It remains to find the quasi-reflective groups that are not finite index subgroups of reflective extended Bianchi groups. Here we return to the lists of candidates from the beginning of this section and apply to them the corresponding part of Proposition~\ref{QR-cuspcnt}. This leaves us with $175$ % = 186-11
candidates for quasi-reflective Bianchi groups and $183$ % = 194 - 11
candidates for quasi-reflective extended Bianchi groups (excluding those which are already checked). In each of these cases we search for the loxodromic isometries in the symmetry group of the Coxeter polyhedron. Loxodromic isometries do not preserve any horospheres, and as such their presence prevents the lattice from being quasi-reflective. Let us illustrate this procedure for $m = 35$.

Vinberg's algorithm applied to $\widehat{Bi}(35)$ does not terminate, and therefore the Coxeter diagram of the reflection subgroup is infinite. We generated the first $29$ vectors, and the Coxeter diagram of these is presented in Figure~\ref{Bi(35)}.
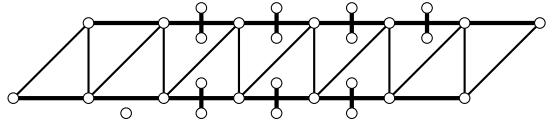
\begin{figure}[h!]
\centering
\begin{tikzpicture}

    \coordinate (one) at (0, 0);
    \coordinate (two) at (1, 0);
    \coordinate (three) at (0.5, 0.2);
    \coordinate (four) at (0.5, -0.2);
    \coordinate (five) at (1, -1);
    \coordinate (six) at (-1, -1);
    \coordinate (seven) at (0, -1);
    \coordinate (eight) at (0.5, -1.2);
    \coordinate (nine) at (-0.5, -1.2);
    \coordinate (ten) at (2, 0);
    \coordinate (eleven) at (-1, 0);
    \coordinate (twelve) at (0.5, -0.8);
    \coordinate (thirteen) at (-0.5, -0.8);
    \coordinate (fourteen) at (1.5, 0.2);
    \coordinate (fifteen) at (-0.5, -0.2);
    \coordinate (sixteen) at (2, -1);    
    \coordinate (seventeen) at (-2, -1);    
    \coordinate (eighteen) at (1.5, -0.2);
    \coordinate (nineteen) at (-0.5, 0.2);
    \coordinate (twenty) at (1.5, -0.8);
    \coordinate (twentyone) at (-1.5, -1.2);
    \coordinate (twentytwo) at (3, 0);
    \coordinate (twentythree) at (-2, 0);
    \coordinate (twentyfour) at (1.5, -1.2);
    \coordinate (twentyfive) at (2.5, -0.2);
    \coordinate (twentysix) at (3, -1);    
    \coordinate (twentyseven) at (-3, -1);    
    \coordinate (twentyeight) at (2.5, 0.2);    
    \coordinate (twentynine) at (4, 0);    

\draw[ultra thick] (one) -- (two);
\draw[thick] (one) -- (six);
\draw[thick] (one) -- (seven);
\draw[ultra thick] (one) -- (eleven);
\draw[thick] (two) -- (five);
\draw[thick] (two) -- (seven);
\draw[ultra thick] (two) -- (ten);
\draw[ultra thick] (three) -- (four);
\draw[ultra thick] (five) -- (seven);
\draw[thick] (five) -- (ten);
\draw[ultra thick] (five) -- (sixteen);
\draw[ultra thick] (six) -- (seven);
\draw[thick] (six) -- (eleven);
\draw[ultra thick] (six) -- (seventeen);
\draw[ultra thick] (eight) -- (twelve);
\draw[ultra thick] (nine) -- (thirteen);
\draw[thick] (ten) -- (sixteen);
\draw[ultra thick] (ten) -- (twentytwo);
\draw[thick] (eleven) -- (seventeen);
\draw[ultra thick] (eleven) -- (twentythree);
\draw[ultra thick] (fourteen) -- (eighteen);
\draw[ultra thick] (fifteen) -- (nineteen);
\draw[thick] (sixteen) -- (twentytwo);
\draw[ultra thick] (sixteen) -- (twentysix);
\draw[thick] (seventeen) -- (twentythree);
\draw[ultra thick] (seventeen) -- (twentyseven);
\draw[ultra thick] (twenty) -- (twentyfour);
\draw[thick] (twentytwo) -- (twentysix);
\draw[ultra thick] (twentytwo) -- (twentynine);
\draw[thick] (twentythree) -- (twentyseven);
\draw[ultra thick] (twentyfive) -- (twentyeight);
\draw[thick] (twentysix) -- (twentynine);

    \filldraw[fill=white] (one) circle (2pt);
    \filldraw[fill=white] (two) circle (2pt);
    \filldraw[fill=white] (three) circle (2pt);
    \filldraw[fill=white] (four) circle (2pt);
    \filldraw[fill=white] (five) circle (2pt);
    \filldraw[fill=white] (six) circle (2pt);
    \filldraw[fill=white] (seven) circle (2pt);
    \filldraw[fill=white] (eight) circle (2pt);
    \filldraw[fill=white] (nine) circle (2pt);
    \filldraw[fill=white] (ten) circle (2pt);
    \filldraw[fill=white] (eleven) circle (2pt);
    \filldraw[fill=white] (twelve) circle (2pt);
    \filldraw[fill=white] (thirteen) circle (2pt);
    \filldraw[fill=white] (fourteen) circle (2pt);
    \filldraw[fill=white] (fifteen) circle (2pt);
    \filldraw[fill=white] (sixteen) circle (2pt);
    \filldraw[fill=white] (seventeen) circle (2pt);
    \filldraw[fill=white] (eighteen) circle (2pt);
    \filldraw[fill=white] (nineteen) circle (2pt);
    \filldraw[fill=white] (twenty) circle (2pt);
    \filldraw[fill=white] (twentyone) circle (2pt);
    \filldraw[fill=white] (twentytwo) circle (2pt);
    \filldraw[fill=white] (twentythree) circle (2pt);
    \filldraw[fill=white] (twentyfour) circle (2pt);
    \filldraw[fill=white] (twentyfive) circle (2pt);
    \filldraw[fill=white] (twentysix) circle (2pt);
    \filldraw[fill=white] (twentyseven) circle (2pt);
    \filldraw[fill=white] (twentyeight) circle (2pt);
    \filldraw[fill=white] (twentynine) circle (2pt);

\end{tikzpicture}
\caption{\label{Bi(35)} Coxeter diagram of the first 29 vectors associated to the Bianchi group $\widehat{Bi}(35)$. (Broken line branches intentionally omitted).}
\end{figure}
This subdiagram has translational symmetry, and we can determine a transformation of $\Hy^3$ that acts on the normal vectors to faces to effect such a symmetry. In the coordinates defined in Section~\ref{background} such a transformation is given by the matrix
$$ g =
\begin{pmatrix}
100 & 429 & -10 & -1230\\
99 & 421 & -9 & -1212\\
30 & 129 & -2 & -369\\
30 & 128 & -3 & -368
\end{pmatrix}.
$$
It is easy to check that this matrix represents a loxodromic transformation of $\Hy^3$ and that it preserves the integral lattice $L_m$. It is also clear that $g$ is not given by a product of reflections because otherwise the required reflections would have been already produced by Vinberg's algorithm. Hence we conclude that $\widehat{Bi}(35)$ is not quasi-reflective.

There remain only two fields that not yet excluded and for which none of the symmetries of the Coxeter diagrams correspond to loxodromic isometries of hyperbolic space. These correspond to $m = 23$ and $m = 31$. Partial Coxeter diagrams of the reflection subgroups of $\widehat{Bi}(23)$ and $\widehat{Bi}(31)$ are presented in Figure~\ref{Bi(23,31)}. In the same way as for $\widehat{Bi}(35)$ we can compute the matrices which effect the translational symmetries of these Coxeter diagrams. All four translations are parabolic isometries of hyperbolic space, and for each diagram the two isometries are linearly independent while preserving a common point on the boundary. Hence these are two new examples of rank $2$ quasi-reflective extended Bianchi groups and the proof of Theorem~\ref{thm3} in now complete. \qed

\bibliographystyle{math}
\bibliography{references}

\end{document}